\newtheorem{ass}{Assumption}[section]
\newcommand{\bbR}{\mathbb R}
\newtheorem{theorem}{Theorem}[section]
\newtheorem{prop}{Proposition}[section]
\newtheorem{ex}{Example}[section]
\newcounter{hypA}
\newcounter{hypB}
\newcounter{hypD}
\newcommand{\bbE}{\mathbb{E}}
\newcommand{\bbP}{\mathbb{P}}
\newcommand{\bbZ}{\mathbb{Z}}
\newcommand{\bbN}{\mathbb{N}}
\newcommand{\cA}{\mathcal{A}}
\newcommand{\cN}{\mathcal{N}}
\newcommand{\cM}{\mathcal{M}}
\newcommand{\cG}{\mathcal{G}}
\newcommand{\sX}{\mathsf X}
\newcommand{\cI}{{\cal I}}
\newcommand{\ba}{\bm{\alpha}}
\newcommand{\bx}{\bm{x}}
\newcommand{\bZ}{\bm{Z}}
\newcommand{\cC}{\mathcal{C}}
\newcommand{\cO}{\mathcal{O}}
\newcommand\norm[1]{\left\lVert#1\right\rVert}
\begin{document}

\begin{center}

{\Large \textbf{A randomized multi-index sequential Monte Carlo method}}

\vspace{0.5cm}

BY 
XINZHU LIANG, 
SHANGDA YANG, 
SIMON L. COTTER, 
KODY J.~H. LAW 

{\footnotesize 
School of Mathematics, University of Manchester, Manchester, M13 9PL, UK.}
{\footnotesize E-Mail:\,} \texttt{\emph{\footnotesize 
xinzhu.liang@postgrad.manchester.ac.uk,
shangda.yang@manchester.ac.uk,
simon.cotter@manchester.ac.uk,
kody.law@manchester.ac.uk}}\\
\end{center}

\begin{abstract}
    We consider the problem of estimating expectations with respect to a target distribution with an unknown normalizing constant, and where even the unnormalized target needs to be approximated at finite resolution. Under such an assumption, this work builds upon a recently introduced multi-index Sequential Monte Carlo (SMC) ratio estimator, which provably enjoys the complexity improvements of multi-index Monte Carlo (MIMC) and the efficiency of SMC for inference. The present work leverages a randomization strategy to remove bias entirely, which simplifies estimation substantially, particularly in the MIMC context, where the choice of index set is otherwise important. Under reasonable assumptions, the proposed method provably achieves the same canonical complexity of MSE$^{-1}$ as the original method (where MSE is mean squared error), but without discretization bias. It is illustrated on examples of Bayesian inverse and spatial statistics problems. 
    \\
\noindent \textbf{Keywords}: Bayesian Inverse Problems,  Sequential Monte Carlo, Multi-Index Monte Carlo
\end{abstract}





\section{Introduction}

We consider the computational approach to Bayesian inverse problems \cite{stuart2010inverse},
which has attracted a lot of attention in recent years. One typically requires the expectation of a quantity of interest $\varphi(x)$,  
where unknown parameter $x \in \sX \subset \bbR^{d}$ 
has posterior probability distribution $\pi(x)$, as given by Bayes' theorem
\footnote{We allow the case 
$d \rightarrow \infty$}. 
Assuming the target distribution cannot be computed analytically, we instead compute the expectation as
\begin{equation}
    \int_{\sX}\varphi(x)\pi(dx) = \frac{1}{Z} \int_{\sX}\varphi(x)f(x) \pi_0(dx),
\end{equation}
where $Z = \int_{\sX}f(x)\pi_0(dx)$, $f$ is prescribed 
up to a normalizing constant, $\pi_0$ is the prior and $\pi(x) \propto f(x) \pi_0(x)$. 
Markov chain Monte Carlo (MCMC) \cite{geyer1992practical,robert1999monte,bernardo1998regression,cotter2013mcmc} and sequential Monte Carlo (SMC) \cite{chopin2020introduction,del2006sequential} are two methodologies which can be used to achieve this. 
In this paper, we use the degenerate notations $d\pi(x) = \pi(dx) = \pi(x)dx$ to mean the same thing, i.e. the probability under $\pi$ of an infinitesimal volume element $dx$ (Lebesgue measure by default) centered at $x$.

Standard Monte Carlo methods can be 
costly, particularly in the case where the problem involves 
approximation of an underlying continuum domain,
a problem setting which is becoming progressively more prevalent over time \cite{tarantola2005inverse,cotter2013mcmc,stuart2010inverse,law2015data,van2015nonlinear,oliver2008inverse}. The multilevel Monte Carlo (MLMC) method was developed to reduce the computational cost in this setting, by performing most simulations with low accuracy and low cost \cite{giles2015multilevel}, and successively refining the approximation with corrections that use fewer simulations with higher cost but lower variance. The MLMC approach has attracted a lot of interest from those working on inference problems recently, such as
MLMCMC \cite{dodwell2015hierarchical,hoang2013complexity} and MLSMC \cite{beskos2018multilevel,beskos2017multilevel,moral2017multilevel}. The related {\em multi-fidelity} Monte Carlo methods often focus on the case where the models lack structure 
and quantifiable convergence behaviour 
\cite{peherstorfer2018survey,cai2022multi},
which is very common across science and engineering applications. It is worthwhile to note that MLMC methods can be implemented on the same class of problems, and {\em do not require structure or a priori convergence 
estimates in order to be implemented}. However, convergence rates provide a convenient mechanism to deliver quantifiable theoretical results. This is the case for both multilevel and multi-fidelity approaches.

Recently, an extension of the MLMC method has been established called multi-index Monte Carlo (MIMC) \cite{haji2016multi}. Instead of using first-order differences, MIMC uses high-order mixed differences to reduce the variance of the hierarchical differences dramatically. MIMC has first been considered to apply to the inference context in \cite{ourmismc2} and \cite{jasra2018multi,jasra2021multi}. The state-of-the-art research of MIMC in inference is presented in \cite{law2022multi}, in which the MISMC ratio estimator for posterior inference is given, and the theoretical convergence rate of this estimator is guaranteed.
Although a canonical complexity of MSE$^{-1}$ for the MISMC ratio estimator can be achieved, it is still suffers from discretization bias. This bias constrains the choice of index set and estimators of the bias often suffer from high variance, which means implementation can be cumbersome for challenging problems where the method is expected to be particularly advantageous otherwise. 

Debiasing techniques were first introduced in \cite{rhee2012new,rhee2015unbiased}, \cite{mcleish2011general} and \cite{strathmann2015unbiased}, with many more works using or developing it further \cite{agapiou2014unbiased,glynn2014exact,jacob2015nonnegative,lyne2015russian,walter2017point}. These debiasing techniques are based on a similar idea as MLMC, but in addition to reducing the estimator variance, the former focus on building unbiased estimators. The connection between the debiasing technique and the MLMC method has been pointed out by \cite{dereich2015general}, \cite{giles2015multilevel} and \cite{rhee2015unbiased}. \cite{vihola2018unbiased} has further clarified the connection within a general framework for unbiased estimators. The first work to combine the debiasing technique and MLMC in the context of inference is \cite{chada2021unbiased}. A recent breakthrough involves using double randomization strategies to remove the bias of the increment estimator \cite{heng2021unbiased,jasra2021unbiased,jasra2020unbiased}. 

The starting point of our current work is the MISMC ratio estimator introduced in \cite{law2022multi}. Our new {\em randomized MISMC (rMISMC) ratio estimator} will be reformulated in the framework of \cite{rhee2015unbiased} to remove discretization bias entirely. Like the MISMC ratio estimator, our estimator provably enjoys the complexity improvements of MIMC and the efficiency of SMC for inference. Theoretical results will be given to show that it achieves the canonical complexity of MSE$^{-1}$ under appropriate assumptions, but {\em without any discretization bias and the consequent requirements for its estimation}. 
From a practical perspective, estimating this bias, and balancing it along with the variance and cost in order to select the index set, comprises a significant overhead for existing multi-index methods.
In addition to convenience and simplification, 
the particular formulation of our un-normalized estimators is novel, and may prove useful in other contexts where one cannot obtain i.i.d. samples from the increments. 
The unbiased estimators of the normalizing constant and un-normalized integral 
can also be useful in their own right, 
in the context of 
Robbins-Monro \cite{robbins1951stochastic}
or other stochastic approximation algorithms \cite{kushner2012stochastic, law2019estimation, jasra2021unbiased}. 

The paper is organized as follows. In Section~\ref{sec:problems}, we present the motivating problems considered in the following numerical experiments. In Section~\ref{sec:rMISMC}, the original MISMC ratio estimator is reviewed for convenience, and the rMISMC ratio estimator and its theoretical results are stated. In Section~\ref{sec:numerics}, we apply MISMC and rMISMC methods on Bayesian inverse problems for elliptic PDEs and log Gaussian process models.

\section{Motivating problems}
\label{sec:problems}

Here, we introduce the Bayesian inference for a D-dimensional elliptic partial differential equation and two statistical models, the log Gaussian Cox model and the log Gaussian process model. We will apply the methods that we present in Section~\ref{sec:rMISMC} to these motivating problems in order to show their efficacy.

\subsection{Elliptic partial differential equation}
We consider a D-dimensional elliptic partial differential equation defined over an open domain $\Omega \subset \bbR^{D}$ with locally continuous boundary $\partial \Omega$, i.e. the boundary is the graph of a continuous function in a neighbourhood of any point. 
Given a forcing function ${\sf f}(x): \Omega \rightarrow \bbR$
and a diffusion coefficient function
$a(x): \Omega \rightarrow \bbR$, 
depending on a random variable $x \sim \pi$, 
the partial differential equation for $u(x): \bar{\Omega} \rightarrow \bbR$ (where $\bar{\Omega}$ is the closure of $\Omega$) is given by
\begin{equation}
\begin{aligned}
    -\nabla \cdot (a(x) \nabla u(x)) & = {\sf f}(x), \ \ \text{on} \ \Omega,\\
    u(x)& = 0, \ \ \text{on} \ \partial \Omega \, .
\label{eqn:pde}
\end{aligned}
\end{equation}
The dependence of the solution $u$ of~(\ref{eqn:pde}) on $x$ is raised from the dependence of $a$ and ${\sf f}$ on $x$.

In particular, we assume the prior distribution in the numerical experiment as
\begin{equation}
    x \sim U[-1,1]^d =: \pi_0
\end{equation}
and $a(x)$ as 
\begin{equation}\label{eqn:coef_pde}
    a(x)(z) = a_0 + \sum_{i=1}^{d} x_i \psi_i(z),
\end{equation}
where $\psi_i$ are smooth functions with $\norm{\psi_i}_{\infty}:= 
 \sup_{z\in \Omega} \vert \psi(z) \vert \leq 1$  for $i=1,...,d$, and $a_0 > \sum_{i=1}^{d}x_i$.

\subsubsection{Finite element method}
\label{sec:pde_finite}

Consider 1D piecewise linear nodal basis functions $\phi_{j}^{K}$ for meshes $\{ z_{i}^{K} = i/(K+1) \}_{i=0}^{K+1}$, $j=1,2,...,K$, which is defined as
\begin{equation}\label{pde_basis}
\phi_j^{K}(z) = \left \{
\begin{array}{lll}
    \frac{z-z_{j-1}^{K}}{z_{j}^{K}-z_{j-1}^{K}}, &  z \in [z_{j-1}^{K}, z_{j}^{K}]\\
    1-\frac{z-z_{j-1}^{K}}{z_{j+1}^{K}-z_{j}^{K}}, &  z \in [z_{j}^{K}, z_{j+1}^{K}] \\
    0, & \text{otherwise}.
\end{array}
\right.
\end{equation}

For an index $\alpha = (\alpha_1,\alpha_2) \in \bbN^{2}$, we can form the tensor product grid over $\Omega =[0,1]^2$ as 
\begin{equation}
    \{(z_{i_1}^{K_{1,\alpha}},z_{i_2}^{K_{2,\alpha}})\}^{K_{1,\alpha}+1,K_{2,\alpha}+1}_{i_1=0,i_2=0},
\end{equation}
where $K_{1,\alpha}=2^{\alpha_1}$ and $K_{2,\alpha}=2^{\alpha_2}$ and the mesh size in each direction is $K_{1,\alpha}^{-1}$ and $K_{2,\alpha}^{-1}$, respectively. Then the bilinear basis function is constructed by the product of nodal basis functions in two directions:
\begin{equation}
\phi_i^{\alpha}(z) = \phi_{i_1,i_2}^{\alpha}(z_1,z_2) = \phi_{i_1}^{K_{1,\alpha}}(z_1)\phi_{i_2}^{K_{2,\alpha}}(z_2),
\end{equation}
where $i=i_1+K_{1,\alpha}i_2$ for $i_1=1,...,K_{1,\alpha}$ and $i_2 = 1,...,K_{2,\alpha}$ and $K_{\alpha} = K_{1,\alpha}K_{2,\alpha}$.

A Galerkin approximation can be written as 
\begin{equation}
    u_{\alpha}(x)(z) = \sum_{i=1}^{K_{\alpha}}u_{\alpha}^{i}(x)(z) \
    \phi_{i}^{\alpha}(z),
\end{equation}
where $u_{\alpha}^i$ for $i=1,...,K_{\alpha}$ are approximate values of the solution $u(x)$ at mesh points that we want to obtain. Using Galerkin approximation to solve the weak solution of PDE~(\ref{eqn:pde}), we can derive a corresponding Galerkin system:
\begin{equation}
    {\bf A}_{\alpha}(x) {\bf u}_{\alpha}(x) = {\bf f}_{\alpha}(x),
\end{equation}
where ${\bf A}_{\alpha}(x)$ is the stiffness matrix whose components are given by
\begin{align}
&  [{\bf A}_{\alpha}(x)]_{ij} := \int_{z_{j_1-1}^{K_{1,\alpha}}}^{z_{j_1+1}^{K_{1,\alpha}}}
\int_{z_{j_2-1}^{K_{2,\alpha}}}^{z_{j_2+1}^{K_{2,\alpha}}} 
a(x)(z) \nabla \phi_{i}^{\alpha}(z) \cdot \nabla\phi_{j}^{\alpha}(z) dz,
\end{align}
where $j = j_1+j_2 K_{1,\alpha}$ for $j_1 = 1,...,K_{1,\alpha}$ and $j_2  =1,...,K_{2,\alpha}$, 
$${\bf u}_{\alpha}(x) = [u_{\alpha}^{1}(x)(z_{1}),...,u_{\alpha}^{K_{\alpha}}(x)(z_{K_{\alpha}})]^{T},$$ 
and $$[{\bf f}_{\alpha}(x)]_{j} = \int_{z_{j_1-1}^{K_{1,\alpha}}}^{z_{j_1+1}^{K_{1,\alpha}}}\int_{z_{j_2-1}^{K_{2,\alpha}}}^{z_{j_2+1}^{K_{2,\alpha}}} {\sf f}(x)(z) \phi_{j}^{\alpha}(z) dz.$$ 

\subsubsection{The Bayesian inverse problem}
 Under an elliptic partial differential equation model, we wish to infer the unknown parameter value $x \in \sX \subset \bbR^{d}$ given $n$ evaluations of the solution $y \in \bbR^{n}$ \cite{stuart2010inverse}. We aim to analyse the posterior distribution $\bbP(x \vert y)$ with density $\pi(x) = \pi(x \vert y)$. In practice, one can only expect to evaluate a discretized version of $x\in \sX$. $\pi(dx)$ then can be obtained up to a constant of proportionality by applying Bayes' theorem:
\begin{equation}
    \pi(dx) \propto L(x) \pi_0(dx),
\end{equation}
where $\pi_0(dx)$ is a density of the prior distribution and $L(x)$ is the likelihood which is proportional to the probability density of the data $y$ was created with a given value of the unknown parameter $x$. 

Define the vector-valued function as follows
\begin{equation}
    \cG(u(x)) = [v_1(u(x)),...,v_n(u(x))]^{T},
\end{equation}
where $n$ is the number of data, $v_i \in L^2$ and $v_i(u(x)) = \int v_i(z)u(x)(z)dz$ for $i=1,...,n$. Then the data can be modelled as
\begin{equation}
    y = \cG(u(x)) + \nu, \ \ \ \nu \sim \cN(0,\Sigma),
\end{equation}
where $\cN(0,\Sigma)$ denotes the Gaussian distribution with mean zero and variance-covariance matrix $\Sigma$. Then the likelihood of the evaluations $y$ can be derived as 
\begin{equation}
    \pi(y\vert x) \propto L(x) := \exp \Big(-\frac{1}{2}\vert y-\cG(u(x))\vert ^2_{\Sigma} \Big),
\end{equation}
where $\vert w \vert_{\Sigma} = (w^{\top}\Sigma^{-1}w)^{1/2}$.

When the solution of the elliptic PDE can only be solved approximately, we denote the approximate solution at resolution multi-index $\alpha$ as $u_{\alpha}$ as described above and the approximate likelihood is given by 
\begin{equation}\label{eqn:finite_likelihood_pde}
    \pi_{\alpha}(y\vert x) \propto L_{\alpha}(x)  := \exp \Big(-\frac{1}{2}\vert y-\cG(u_{\alpha}(x))\vert ^2_{\Sigma} \Big),
\end{equation}
and the posterior density is given by
\begin{equation}\label{eqn:likelihood_pde}
    \pi_{\alpha}(dx) \propto L_{\alpha}(x)\pi_0(dx).
\end{equation}

\subsection{Log Gaussian process models}
\label{sec:lgp}

Now, we consider the log Gaussian Cox model and the log Gaussian process model. A log Gaussian process (LGP) $\Lambda(z)$ is given by
\begin{equation}
    \Lambda(z) = \exp\{ x(z) \} \, ,
\end{equation}
where $x = \{ x(z): z\in \Omega \subset \bbR^{D} \}$ is a real-valued Gaussian process \cite{rasmussen2003gaussian,stuart2010inverse}.
The log Gaussian process model provides a flexible  approach to non-parametric density modelling with controllable smoothness properties. However, inference for the LGP is intractable. 
The LGP model for density estimation 
\cite{tokdar2007posterior} assumes data $z_i \sim p$, where $p(z) = \Lambda(z)/\int_{\Omega} \Lambda(z) dz$. As such, the likelihood of $x$ associated to observations $\mathsf{Z}=\{z_1,\dots,z_n\}$ is given by
\begin{equation}\label{eqn:likelihood_lgp}
L(x ; \mathsf{Z} ) =  
\prod_{z \in \mathsf{Z}} p(z) = 
\left(\int_{\Omega} \Lambda(z) dz \right)^{-n} 
\prod_{z \in \mathsf{Z}} \Lambda(z) \, .
\end{equation}

The log Gaussian Cox (LGC) model assumes the observations are distributed according to a spatially inhomogeneous Poisson point process with intensity function given
by $\Lambda$.
The likelihood of observing $\mathsf{Z}=\{z_1,\dots,z_n\}$
under the LGC model is \cite{moller1998log,murray2010elliptical,law2022multi,cai2022multi} 
\begin{equation}\label{eqn:likelihood_lgc} 
\begin{aligned}
  L(x ; \mathsf{Z}) & =  \exp\Big\{ \int_{\Omega} (1-\Lambda(z)) dz \Big\} \prod_{z\in \mathsf{Z}} \Lambda(z), \\
  &= \exp\Big\{ \int_{\Omega} (1-\exp(x(z))) dz \Big\} 
  \prod_{z\in \mathsf{Z}} \exp(x(z))\, .
\end{aligned}
\end{equation}
This construction has an elegant simplicity, which is flexible and convenient due to the underlying Gaussian process.
Some example applications are presented in
\cite{diggle2013spatial}.

We consider a dataset comprised of the location of $n=126$ Scots pine saplings in a natural forest in Finland \cite{moller1998log}, denoted $z_1,...,z_n \in [0,1]^2$. 
This is modeled with both LGC, following \cite{heng2020controlled},
and LGP, following \cite{tokdar2007posterior}. The prior is defined in terms of a KL-expansion with a suitable parameter $\theta = (\theta_1,\theta_2,\theta_3)$ as follows, for $z \in [0,2]^2$,
\begin{equation}\label{eqn:gaussian_pro}
    x(z) = \theta_1 + \sum_{k \in \bbZ \times \bbZ_{+} \cup \bbZ_{+} \times 0} \rho_{k} (\theta) (\xi_{k}\phi_{k}(z) + \xi_{k}^{*} \phi_{-k}(z)), 
\end{equation}
$$\xi_{k} \sim \cC\cN(0,1) \ \text{i.i.d.},$$
where $\cC\cN(0,1)$ denotes a standard complex normal distribution, $\xi_{k}^{*}$ is the complex conjugate of $\xi_{k}$, $\phi_{k}(z) \propto \exp[\pi i z \cdot k]$ are Fourier series basis functions (with $i=\sqrt{-1}$) and 
\begin{equation}\label{eqn:spectral_decay}
    \rho_{k}^{2}(\theta) = \theta_2/((\theta_3 + k_1^2)(\theta_3 + k_2^2))^{\frac{r+1}{2}}.
\end{equation}
The coefficient $r$ controls the smoothness, and here we will choose $r=1.6$. Note that the periodic prior measure is defined on $[0,2]^2$ so that no boundary conditions are imposed on the sub-domain $[0,1]^2$. Then, the posterior distribution is given by 
\begin{equation}
    \pi(dx) \propto L(x) \pi_0(dx),
\end{equation}
where $\pi_0$ is constructed in (\ref{eqn:gaussian_pro}) and $L(x)$ is constructed  in (\ref{eqn:likelihood_lgc}) (or (\ref{eqn:likelihood_lgp})).

\subsubsection{The finite approximation problem}

One typically use a grid-based approximation to approximate the inferences in LGC \cite{murray2010elliptical,diggle2013spatial,teng2017bayesian,cai2022multi} and in LGP \cite{riihimaki2014laplace,griebel2010finite,tokdar2007towards}. We approximate the likelihoods and priors of LGC and LGP by the fast Fourier transform (FFT) respectively, as described below. First, we truncate the KL-expansion of prior as follows, for an index $\alpha = (\alpha_1,\alpha_2) \in \bbN^{2}$,
\begin{equation}\label{eqn:gaussian_pro_app}
    x_{\alpha}(z) = \theta_1 + \sum_{k \in \cA_{\alpha}} \rho_{k} (\theta) (\xi_{k}\phi_{k}(z) + \xi_{k}^{*} \phi_{-k}(z)), 
\end{equation}
$$\xi_{k} \sim \cC\cN(0,1) \ \text{i.i.d.},$$
where $\cA_{\alpha} := \{ -2^{\alpha_1/2},-(2^{\alpha_1/2}-1),...,2^{\alpha_1/2}-1,2^{\alpha_1/2} \} \times \{ 1,2,...,2^{\alpha_2/2}-1,2^{\alpha_2/2} \} \cup \{ 1,2,...,2^{\alpha_2/2}-1,2^{\alpha_2/2} \} \times 0$. The cost for approximating $x_{\alpha}(z)$ over the grid is $\cO((\alpha_1+\alpha_2)2^{\alpha_1+\alpha_2})$. The finite approximations of the likelihood of LGC and LGP are then defined by
\begin{align}
    & \text{(LGC)} \ 
    L_{\alpha}(x_{\alpha}) :=\exp \Bigg[ \sum_{i=1}^{n} \hat{x}_{\alpha}(z_i) - Q(\exp(x_{\alpha})) \Bigg] ,\\
    & \text{(LGP)} \ 
    L_{\alpha}(x_{\alpha}) :=\exp \Bigg[ \sum_{i=1}^{n} \hat{x}_{\alpha}(z_i) - n\log Q(\exp(x_{\alpha})) \Bigg] \label{eqn:finite_likelihood_lgp},
\end{align}
where $\hat{x}_{\alpha}(z)$ is defined as an interpolant over the grid output from FFT and $Q$ denotes a quadrature rule,
such that $Q(\exp (x_{\alpha})) \approx \int_{[0,1]^2} \exp(x(z)) dz$. Then, the finite approximations of the posterior distribution of LGC and LGP are defined by
\begin{equation}
    \pi_{\alpha}(dx_{\alpha}) \propto L_{\alpha}(x_{\alpha}) \pi_0(dx_{\alpha}).
\end{equation}

The quantity of interest for these models will be $\varphi(x) = \int_{[0,1]^2} \exp (x(z)) dz$, and we will estimate
its expectation $\pi(\varphi) = \bbE ( \varphi(x) \vert z_1,\dots, z_n )$.

\section{Randomized Multi-index sequential Monte Carlo}
\label{sec:rMISMC}

The original MISMC estimator has been considered in \cite{ourmismc2} and \cite{jasra2018multi,jasra2021multi}. Convergence guarantees have been established in \cite{law2022multi}, which demonstrates the importance of selecting a reasonable index set, by comparing the results with  the tensor product index set and  the total degree index set. Then, a very interesting extension to multi-index sequential Monte Carlo is introduced, which is called randomized multi-index sequential Monte Carlo. The basic methodology of randomized multi-index Monte Carlo is first introduced in \cite{rhee2012new,rhee2015unbiased}. Instead of giving an index set in advance, we choose $\alpha$ randomly from a distribution. Another advantage of this approach is that it can give an unbiased unnormalized estimator, which is discretization-free.

Define the target distribution as 
$\pi(x) = f(x)/Z$, where $Z = \int_{\sX} f(x) dx$
and $f(x):= L(x) \pi_0(x)$. Given a quantity of interest $\varphi: \sX \rightarrow \bbR$, for simplicity, we define 
\begin{equation}
    f(\varphi) := \int_{\sX} \varphi(x) f(x) dx = f(1) \pi(\varphi),
\end{equation}
where $f(1) = \int_{\sX} f(x) dx = Z$. 
Define their approximations at finite resolution 
$\alpha \in \bbZ_+^D$
by
$\pi_\alpha(x) = f_\alpha(x)/Z_\alpha$, where $Z_\alpha = \int_{\sX} f_\alpha(x) dx$
and $f_\alpha(x):= L_\alpha(x) \pi_0(x)$, and $\varphi_{\alpha}:\sX \rightarrow \bbR$, where 
$\lim_{\vert\alpha\vert\uparrow \infty} f_\alpha = f$
and $\lim_{\vert\alpha\vert\uparrow \infty}
\varphi_\alpha = \varphi$.

Consider the ratio decomposition
\begin{equation}\label{eq:increments_ratio}
\pi(\varphi) = \frac{f(\varphi)}{f(1)} = 
\frac{\sum_{\alpha\in \bbZ_+^D} \Delta( f_\alpha(\varphi_\alpha) )} 
{\sum_{\alpha\in \bbZ_+^D} \Delta f_\alpha(1)},
\end{equation}
where $\Delta$ is the first-order mixed difference operator 
\begin{equation}\label{eq:increment}
\Delta = \otimes_{i=1}^{D} \Delta_i := \Delta_D \circ \cdots \circ \Delta_1 \, ,
\end{equation}
which is defined recursively by the first-order difference operator $\Delta_i$ along direction $1 \leq i \leq D$. If $\alpha_i>0$, 
\begin{equation}\label{eq:incrementi}
\Delta_i \varphi_{\alpha}(x_{\alpha}) = \varphi_{\alpha}(x_{\alpha}) - \varphi_{\alpha-e_i}(x_{\alpha}) \, ,
\end{equation}
where $e_i$ is the canonical vectors in $\bbR^{D}$, i.e. $(e_i)_j=1$ for $j=i$ and 0 otherwise. If $\alpha_i=0$, $\Delta_i \varphi_{\alpha}(x_{\alpha}) = \varphi_{\alpha}(x_{\alpha})$.

For convenience, we denote the vector of multi-indices
\begin{equation}
    \ba(\alpha) := (\ba_1(\alpha),...,\ba_{2^D}(\alpha) ) \in \bbZ_{+}^{D \times 2^D}, 
\end{equation}
where $\ba_1(\alpha) = \alpha$, $\ba_{2^D}(\alpha) = \alpha - \sum_{i=1}^{D}e_i$ and $\ba_i(\alpha)$ for $1 < i < 2^D$ correspond to the intermediate multi-indices while computing the mixed difference operator $\Delta$.

Throughout this section $C>0$ is a constant whose value may change from line to line.

\subsection{Original MISMC ratio estimator}

In order to make use of \eqref{eq:increments_ratio}, we need to construct estimators of 
$\Delta( f_\alpha(\zeta_\alpha) )$, both for our quantity of interest 
$\zeta_\alpha = \varphi_\alpha$ and for $\zeta_\alpha=1$. The natural and naive way to estimate $\Delta( f_\alpha(\zeta_\alpha) )$ is based on sampling from a coupling of $(\pi_{\ba_1(\alpha)},...,\pi_{\ba_{2^D}(\alpha)})$. However, this is not a trivial approach, instead we construct an approximate coupling $\Pi_{\alpha}: \sigma(\sX^{2^D}) \rightarrow [0,1]$ as follows. We first define the coupling prior distribution as 
\begin{equation}\label{eq:priorcoupling}
\Pi_0(d\bx) = \pi_0(d\bx_1) \prod_{i=2}^{2^D} \delta_{\bx_1}(d \bx_i) \, ,
\end{equation}
where $\bx = (\bx_1,...,\bx_{2^D}) \in \sX^{2^D}$ and $\delta_{\bx_1}$ denotes the Dirac delta function at $\bx_1$.
Note that this is an exact coupling of the prior in the sense that 
for any $j \in \{1,\dots, 2^D\}$
\begin{equation}\label{eq:priormarginal}
\int_{\sX^{2^D-1}} \Pi_0(d\bx_{-j}) 
= \pi_{0}(d\bx_j) \, .
\end{equation}
Here we denote $\bx_{-j} = (\bx_1,...,\bx_{j-1},\bx_{j+1},...,\bx_{2^D})$ which omits the $j$th coordinate. Indeed it is the same coupling used in MIMC \cite{haji2016multi}.

In order to provide estimates analogous to the variance rate in the MIMC \cite{haji2016multi}, we use the SMC sampler \cite{chopin2020introduction,del2006sequential} to compute. We hence adapt Algorithm \ref{algo:smc} to an extended target which is an approximate coupling 
of the actual target as in \cite{jasra2018bayesian,jasra2018multi,jasra2021multi}, \cite{ourmismc2} and \cite{vihola}, 
and utilize a ratio of estimates, similar to
\cite{vihola}.
To this end, we define a likelihood on the coupled space as
\begin{equation}\label{eq:coupledlike}
{\bf L}_\alpha(\bx) = \max \{ L_{\ba_1(\alpha)}(\bx_1), \dots, L_{\ba_{2^D}(\alpha)}(\bx_{2^D}) \}  \, .
\end{equation}
The approximate coupling is defined by
\begin{equation}\label{eq:approxcoupling}
F_\alpha(d\bx) = {\bf L}_\alpha(\bx) \Pi_0(d\bx), \qquad \Pi_\alpha(d\bx) = \frac1{F_\alpha(1)} F_\alpha(d\bx) \, .
\end{equation}

\begin{ex}[Approximate Coupling]\label{ex:approx_coupling}
Let $D=2$, $d=1$ and $\alpha=(1,1)$, an example of the approximate coupling constructed in \eqref{eq:priorcoupling}, \eqref{eq:coupledlike} and \eqref{eq:approxcoupling} is given by, for $\bx =(\bx_1,\bx_2,\bx_3,\bx_4) \in \sX^{4}$,
\begin{align*}
    \Pi_{(1,1)}(\bx_1,\bx_2,\bx_3,\bx_4)  & \approx F_{(1,1)}(\bx_1,\bx_2,\bx_3,\bx_4) = {\bf L}_{(1,1)}(\bx_1,\bx_2,\bx_3,\bx_4) \Pi_0(\bx_1,\bx_2,\bx_3,\bx_4),
\end{align*}
where ${\bf L}_{(1,1)}(\bx_1,\bx_2,\bx_3,\bx_4) = \max\{ L_{00}(\bx_1),L_{01}(\bx_2),L_{10}(\bx_3),L_{11}(\bx_4) \}$ and $ \Pi_0(\bx_1,\bx_2,\bx_3,\bx_4) = \pi_0(\bx_1)\delta_{\bx_1}(\bx_2) \delta_{\bx_1}(\bx_3) \delta_{\bx_1}(\bx_4)$. For our choice of prior coupling \eqref{eq:priorcoupling}, we effectively have a single distribution, for $x \in \sX$,
\begin{equation*}
    \Pi_{(1,1)} \approx \max \{ L_{00}(x), L_{01}(x),L_{10}(x),L_{(11)}(x) \} \pi_0(x).
\end{equation*}
Note that any suitable prior which preserves the marginal as in \eqref{eq:priormarginal} is admissible.
\end{ex}

Let $H_{\alpha,j} = F_{\alpha, j+1}/F_{\alpha,j}$ for some intermediate distributions
$F_{\alpha,1}, \dots, F_{\alpha,J}=F_{\alpha}$, for example $F_{\alpha,j} = {\bf L}_\alpha(\bx)^{\tau_j} \Pi_0(\bx)$,
where the tempering parameter satisfies $\tau_1=0$, $\tau_j<\tau_{j+1}$, and $\tau_J=1$ 
(for example $\tau_j = (j-1)\tau_0$).
Now let $\bm{\cM}_{\alpha,j}$ for $j=2,\dots,J$
be Markov transition kernels such that $(\Pi_{\alpha,j} \bm{\cM}_{\alpha,j})(d\bx) = \Pi_{\alpha,j}(d\bx)$, analogous to $\cM$ as any suitable MCMC kernel \cite{geyer1992practical,robert1999monte,cotter2013mcmc}.

\begin{algorithm}
\caption{SMC sampler for coupled estimation of $\Delta( f_\alpha(\zeta_\alpha) )$}\label{algo:smc}
\begin{algorithmic}[1]
\State Let $\bx^{1,i} \sim \pi_1$ for 
$i=1,\dots, N$, and $\bZ^N_1=1$. For $j=2,\dots,J$, repeat the following steps for $i=1,\dots, N$:
\State Store $\bZ_j^N = \bZ_{j-1}^N \frac1N \sum_{n=1}^N H_{\alpha,j-1}(\bx^{j-1,n})$.
\State Define $w_j^i = H_{\alpha,j-1}(\bx^{j-1,i}) / \sum_{k=1}^N H_{\alpha,j-1}(\bx^{j-1,k})$.
\State Resample. Select $I_j^i \sim \{ w_j^1, \dots, w_j^N\}$, and let $\hat{\bx}^{j,i} = \bx^{j-1,I_j^i}$.
\State Mutate. Draw $\bx^{j,i} \sim \bm{\cM}_{\alpha,j}( \hat{\bx}^{j,i},\cdot)$.
\end{algorithmic}
\end{algorithm}

For $j=1,\dots, J$ define 
\begin{equation}\label{eq:nocoup}
\Pi^N_{\alpha,j}(d\bx) := \frac1N \sum_{i=1}^N \delta_{\bx^{j,i}}(d\bx) \, , 
\end{equation}
and then define
\begin{equation}\label{eq:unnocoup}
\bZ_{\alpha}^N := \prod_{j=1}^{J-1} \Pi^N_{\alpha,j}( H_{\alpha,j} ) \, , \quad  
F^N_\alpha(d\bx) := \bZ_\alpha^N \Pi^N_{\alpha,J}(d\bx) \, .
\end{equation}

The following Assumption will be needed.
\begin{ass} \label{ass:G1}
Let $J\in\mathbb{N}$ be given, and let $\sX$ be a Banach space. For each $j\in\{1,\dots,J\}$ there exists some $C>0$ such that for all $(\alpha,x)\in\mathbb{Z}_+^D\times \sX$, 
$$
C^{-1} < Z, L_\alpha(x) 
\leq C.
$$
\end{ass}

Then, we have the following convergence result
\cite{del2004feynman}. 
\begin{prop}\label{prop:unnocoupled}
Assume \ref{ass:G1}. Then for any $(J,p)\in\mathbb{N}\times(0,\infty)$ there exists a $C>0$ such that for any $N\in\mathbb{N}$, $\psi : \sX^{2^D} \rightarrow \bbR$ bounded and
measurable, and $\alpha\in\mathbb{Z}_+^D$,
$$
\bbE\left[\vert F_\alpha^N(\psi) - F_\alpha(\psi) \vert ^p\right]^{1/p} 
\leq \frac{C \norm{\psi}_\infty}{N^{1/2}} \, .
$$
In addition, the estimator is unbiased
$\bbE[F_{\alpha}^N(\psi)] = F_\alpha(\psi)$.
\end{prop}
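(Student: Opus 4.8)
The plan is to recognize $F_\alpha^N$ as the (unnormalized) Feynman--Kac particle approximation associated with the SMC sampler in Algorithm~\ref{algo:smc}, and to invoke the now-standard $L^p$ and unbiasedness estimates for such approximations, e.g. from \cite{del2004feynman}. First I would set up the Feynman--Kac formalism: identify the sequence of target measures $\Pi_{\alpha,1},\dots,\Pi_{\alpha,J}=\Pi_\alpha$ with intermediate potentials $H_{\alpha,j} = F_{\alpha,j+1}/F_{\alpha,j}$ and mutation kernels $\bm{\cM}_{\alpha,j}$, and check that the particle system $(\bx_j^i)_{i=1}^N$ together with the running product $\bZ_\alpha^N = \prod_{j=1}^{J-1}\Pi_{\alpha,j}^N(H_{\alpha,j})$ is exactly the standard normalized/unnormalized particle pair. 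Then $F_\alpha^N(\psi) = \bZ_\alpha^N \Pi_{\alpha,J}^N(\psi)$ is the canonical unnormalized particle estimate of $\gamma_J(\psi) := \Pi_\alpha(\psi) \cdot \prod_{j=1}^{J-1}\Pi_{\alpha,j}(H_{\alpha,j})$, and one checks that this product telescopes to $F_\alpha(\psi)$ because $\prod_{j=1}^{J-1}\Pi_{\alpha,j}(H_{\alpha,j}) = F_{\alpha,J}(1)/F_{\alpha,1}(1) = F_\alpha(1)$ under the tempering construction $F_{\alpha,j} = {\bf L}_\alpha^{\tau_j}\Pi_0$ with $\tau_1=0$, together with $\Pi_\alpha(\psi) = F_\alpha(\psi)/F_\alpha(1)$.

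Next I would verify that Assumption~\ref{ass:G1} supplies exactly the hypotheses needed for the cited results: the potentials $H_{\alpha,j}$ (equivalently the $h_{\alpha,j}$) are uniformly bounded above and below, uniformly in $\alpha$ and $x$, which is the standard ``bounded potential'' condition guaranteeing stability of the Feynman--Kac semigroup. Given this, the $L^p$ bound follows from the classical result (see \cite[Chapter 7]{del2004feynman}) that for bounded measurable $\psi$,
$$
\bbE\big[\,|\gamma_J^N(\psi) - \gamma_J(\psi)|^p\,\big]^{1/p} \leq \frac{C(J,p)\,\|\psi\|_\infty}{N^{1/2}} \, ,
$$
where $C(J,p)$ depends on $J$, $p$, and the bounds in Assumption~\ref{ass:G1}, but crucially not on $\alpha$ since those bounds are $\alpha$-uniform. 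The unbiasedness $\bbE[F_\alpha^N(\psi)] = F_\alpha(\psi)$ is the well-known unbiasedness of the unnormalized SMC estimator, which can be proved directly by induction on $j$: conditioning on the particle system up to generation $j-1$ and using that resampling is unbiased and the mutation kernels are $\Pi_{\alpha,j}$-invariant, one shows $\bbE[\bZ_{j}^N \Pi_{\alpha,j}^N(g)] = \bbE[\bZ_{j-1}^N \Pi_{\alpha,j-1}^N(H_{\alpha,j-1} \,\cM_{\alpha,j}g)]$ and unwinds the recursion to the initialization $\bZ_1^N = 1$, $\bx_1^i \sim \Pi_{\alpha,1}$.

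The main obstacle is not any single deep estimate --- both ingredients are textbook for a fixed model --- but rather bookkeeping the $\alpha$-uniformity of the constant $C$ and matching the paper's specific normalization conventions (the indexing of $H_{\alpha,j}$, the convention $\bZ_1^N=1$, the role of $F_{\alpha,1}=\Pi_0$ versus $\pi_1$) to the hypotheses of \cite{del2004feynman}. In particular I would be careful that ``$\psi$ bounded'' is used only through $\|\psi\|_\infty$ and that the lower bound $C^{-1}<Z$ in Assumption~\ref{ass:G1}, together with the lower bound on $h_{\alpha,j}$, is what prevents the normalizing constants $F_{\alpha,j}(1)$ from degenerating; once these are in place the result is immediate from the cited theory. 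I would therefore present the proof as: (i) cast as Feynman--Kac; (ii) telescope to identify the limit as $F_\alpha(\psi)$; (iii) quote the $L^p$ bound with $\alpha$-independent constant thanks to Assumption~\ref{ass:G1}; (iv) give the short inductive argument for unbiasedness.
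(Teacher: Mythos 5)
Your proposal is correct and matches the paper's approach: the paper offers no independent argument for this proposition, attributing it directly to the standard Feynman--Kac particle approximation theory in \cite{del2004feynman}, which is precisely the $L^p$ bound and unnormalized-estimator unbiasedness you invoke, with Assumption~\ref{ass:G1} supplying the $\alpha$-uniform bounded-potential condition. Your additional bookkeeping (telescoping the normalizing constants and the inductive unbiasedness argument) is a faithful elaboration of what the citation presumes.
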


Now, we define the function $\psi$ with respect to an arbitrary test function $\zeta_\alpha$, as follows
\begin{align}\label{eq:psidef}
\psi_{\zeta_\alpha}(\bx) & := \sum_{k=1}^{2^D} \iota_k \omega_k(\bx) \zeta_{\ba_k(\alpha)}(\bx_k) \, , \\ 
\omega_k(\bx)& := \frac{L_{\ba_k(\alpha)}\left (\bx_{k} \right)}
{{\bf L}_{\alpha}(\bx)} \, ,
\end{align}
where $\iota_k \in \{-1,1\}$ is the sign of the 
$k^{\rm th}$ term in 
$\Delta f_\alpha$ \footnote{Recall 
from equations \eqref{eq:increment}, \eqref{eq:incrementi} that $\Delta = \otimes_{i=1}^{D} \Delta_i = \Delta_D \circ \cdots \circ \Delta_1$ and $\Delta_i f_{\alpha} = f_{\alpha} - f_{\alpha-e_i}$.}. The function $\psi_{\zeta_{\alpha}}$ gives the mixed difference of the quantity of interest $\zeta_{\alpha}$ among $2^D$ intermediate multi-indices.
Of particular interest in our estimator are the functions
$\zeta_{\alpha} = \varphi_{\alpha}$, 
for arbitrary $\varphi_\alpha$,  
and $\zeta_{\alpha} = 1$.

\begin{ex}[Mixed Difference]
    Following from Example \ref{ex:approx_coupling}, an example of the mixed difference of the quantity of interest $\zeta_{(1,1)}$ constructed in \eqref{eq:psidef} is given by
    \begin{align*}
        \psi_{\zeta_{(1,1)}}(\bx) &= \bigg(\frac{L_{11}(\bx_4)}{{\bf L}_{(1,1)}(\bx)} \zeta_{11}(\bx_4) - \frac{L_{10}(\bx_4)}{{\bf L}_{(1,1)}(\bx)} \zeta_{10}(\bx_3) \bigg) - \bigg(\frac{L_{01}(\bx_4)}{{\bf L}_{(1,1)}(\bx)} \zeta_{01}(\bx_2) - \frac{L_{00}(\bx_4)}{{\bf L}_{(1,1)}(\bx)} \zeta_{00}(\bx_1) \bigg).
    \end{align*}
    Note that the signs of the terms in the mixed difference are $\iota_1 = 1$, $\iota_2 = -1$, $\iota_3 = -1$ and $\iota_4 = 1$.
\end{ex}

Following from Proposition \ref{prop:unnocoupled} we have that
\begin{equation}\label{eq:coupledbias}
\bbE[F_\alpha^N(\psi_{\zeta_\alpha})] = F_\alpha(\psi_{\zeta_\alpha}) 
= \Delta f_\alpha(\zeta_\alpha) \, ,
\end{equation}
and there is a $C>0$ such that
\begin{equation}\label{eq:coupledrate}
\bbE\left[\vert F_\alpha^N(\psi_{\zeta_\alpha}) - F_\alpha(\psi_{\zeta_\alpha}) \vert ^2\right] \leq C \frac{\norm{\psi_{\zeta_\alpha}}_\infty}{N} \, .
\end{equation}

Now given $\cI \subset \bbZ_+^D$ and $\{N_\alpha\}_{\alpha \in \cI}$, and $\varphi :\sX \rightarrow \bbR$, for each $\alpha$ run an independent SMC sampler as in Algorithm \ref{algo:smc} with $N_\alpha$ samples, and define the MIMC estimator as 
\begin{equation}\label{eq:mismc}
\widehat\varphi^{\rm MI}_\cI = \frac{\sum_{\alpha \in \cI} F^{N_\alpha}_\alpha(\psi_{\varphi_\alpha})}
{\max\{\sum_{\alpha \in \cI} F^{N_\alpha}_\alpha(\psi_{1}), Z_{\rm min}\}} \, ,
\end{equation}
where $Z_{\rm min}$ is a lower bound on $Z$.

A finer analysis than provided in Proposition \ref{prop:unnocoupled} in order to achieve rigorous MIMC complexity results is shown in Theorem~\ref{thm:mainshangda} given in \cite{law2022multi}.

\begin{theorem}\label{thm:mainshangda}
Assume \ref{ass:G1}. Then for any $J\in\mathbb{N}$ there exists a $C>0$ such that for any $N\in\mathbb{N}$, $\psi : \sX^{2^D} \rightarrow \bbR$ bounded and
measurable and $\alpha\in\mathbb{Z}_+^D$
$$
\bbE_{\alpha} \left[ \vert  F_\alpha^N(\psi_{\zeta_\alpha}) - 
F_\alpha(\psi_{\zeta_\alpha}) \vert ^2\right] 
\leq \frac{C}{N} 
\int_\sX (\Delta( L_\alpha(x)\zeta_\alpha(x) ))^2 \pi_0(dx) \, , 
$$
where $\psi_{\zeta_\alpha}(\bx)$ is as \eqref{eq:psidef}.
\end{theorem}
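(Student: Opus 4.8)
The plan is to refine the proof of Proposition~\ref{prop:unnocoupled}: instead of bounding the Feynman--Kac semigroup applied to the test function by its supremum norm, I would keep it inside an $L^2$ norm with respect to the particle flow, derive a bound of the form $\tfrac{C}{N}F_\alpha(\psi^2)$ for a generic bounded $\psi$, and then use the fact that $\Pi_0$ (hence every $\Pi_{\alpha,j}\propto{\bf L}_\alpha^{\tau_j}\Pi_0$ and the whole run of Algorithm~\ref{algo:smc}) lives on the diagonal of $\sX^{2^D}$ to identify $F_\alpha(\psi_{\zeta_\alpha}^2)$ with the claimed mixed-difference integral. Concretely, I would first recall the standard telescoping decomposition of the unnormalized SMC error (see \cite{del2004feynman}): writing $\eta_{\alpha,p}$, $\Phi_{\alpha,p}$ and $Q_{\alpha,p,J}$ for the Feynman--Kac flow, one-step mapping and semigroup associated with the potentials $H_{\alpha,j}$ and mutation kernels $\bm{\cM}_{\alpha,j}$,
$$
F_\alpha^N(\psi)-F_\alpha(\psi)\;=\;\sum_{p=1}^{J}\bZ^N_{\alpha,p}\,\bigl[\Pi^N_{\alpha,p}-\Phi_{\alpha,p}(\Pi^N_{\alpha,p-1})\bigr]\bigl(Q_{\alpha,p,J}\psi\bigr)\;=:\;\sum_{p=1}^{J}D_p,
$$
where each $D_p$ is a martingale increment for the filtration $\cF_{p-1}$ generated by the first $p-1$ generations (conditionally on $\cF_{p-1}$, the step-$p$ particles are i.i.d.\ from $\Phi_{\alpha,p}(\Pi^N_{\alpha,p-1})$ and $Q_{\alpha,p,J}\psi$ is $\cF_{p-1}$-measurable). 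Orthogonality then gives $\bbE_\alpha[|F_\alpha^N(\psi)-F_\alpha(\psi)|^2]=\sum_{p=1}^{J}\bbE_\alpha[D_p^2]$, and conditioning on $\cF_{p-1}$,
$$
\bbE_\alpha[D_p^2\mid\cF_{p-1}]\;\le\;\frac{(\bZ^N_{\alpha,p})^2}{N}\,\Phi_{\alpha,p}(\Pi^N_{\alpha,p-1})\bigl((Q_{\alpha,p,J}\psi)^2\bigr).
$$

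Assumption~\ref{ass:G1} is the key quantitative input: the potentials are two-sided bounded uniformly in $\alpha$, which yields (i) $\bZ^N_{\alpha,p}\le C$ a.s.; (ii) $|Q_{\alpha,p,J}\psi|\le C\,M_{\alpha,p,J}|\psi|$, with $M_{\alpha,p,J}$ the composition of the mutation kernels between generations $p$ and $J$, so $(Q_{\alpha,p,J}\psi)^2\le C\,M_{\alpha,p,J}(\psi^2)$ by Jensen; and (iii) the finitely many normalized flows $\eta_{\alpha,p}M_{\alpha,p,J}$ and $\eta_{\alpha,J}=\Pi_\alpha$ are mutually equivalent, with Radon--Nikodym derivatives bounded by a constant depending only on $C$ and $J$. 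Taking expectations, using $\bbE_\alpha[\Phi_{\alpha,p}(\Pi^N_{\alpha,p-1})(g)]\le C\,\eta_{\alpha,p}(g)$ for $g\ge0$, and $F_\alpha(1)=\Pi_0({\bf L}_\alpha)\ge C^{-1}$, one gets for every $p$
$$
\bbE_\alpha[D_p^2]\;\le\;\frac{C}{N}\,\eta_{\alpha,p}\bigl(M_{\alpha,p,J}(\psi^2)\bigr)\;\le\;\frac{C}{N}\,\Pi_\alpha(\psi^2)\;\le\;\frac{C}{N}\,F_\alpha(\psi^2),
$$
hence $\bbE_\alpha[|F_\alpha^N(\psi)-F_\alpha(\psi)|^2]\le\tfrac{CJ}{N}\,F_\alpha(\psi^2)$ for any bounded measurable $\psi$.

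It remains to evaluate $F_\alpha(\psi_{\zeta_\alpha}^2)$. Because $\Pi_0$, and therefore every $\Pi_{\alpha,j}$ and the entire trajectory of Algorithm~\ref{algo:smc}, is supported on the diagonal $\{\bx_1=\dots=\bx_{2^D}\}$, on that set $\omega_k(\bx)=L_{\ba_k(\alpha)}(\bx_1)/{\bf L}_\alpha(\bx)$ and, by the definition of the signs $\iota_k$ and corner indices $\ba_k(\alpha)$ in \eqref{eq:psidef},
$$
{\bf L}_\alpha(\bx)\,\psi_{\zeta_\alpha}(\bx)\;=\;\sum_{k=1}^{2^D}\iota_k\,L_{\ba_k(\alpha)}(\bx_1)\,\zeta_{\ba_k(\alpha)}(\bx_1)\;=\;\bigl[\Delta(L_\alpha\zeta_\alpha)\bigr](\bx_1).
$$
Consequently
$$
F_\alpha(\psi_{\zeta_\alpha}^2)=\Pi_0\bigl({\bf L}_\alpha\,\psi_{\zeta_\alpha}^2\bigr)=\int_\sX\frac{\bigl(\Delta(L_\alpha(x)\zeta_\alpha(x))\bigr)^2}{\max_k L_{\ba_k(\alpha)}(x)}\,\pi_0(dx)\;\le\;C\int_\sX\bigl(\Delta(L_\alpha(x)\zeta_\alpha(x))\bigr)^2\,\pi_0(dx),
$$
since $\max_k L_{\ba_k(\alpha)}(x)={\bf L}_\alpha(x,\dots,x)\ge C^{-1}$ by Assumption~\ref{ass:G1}. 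Combining this with the bound of the previous paragraph proves the theorem.

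I expect the main obstacle to be step (iii): controlling the Feynman--Kac semigroup $Q_{\alpha,p,J}\psi$ in $L^2(\eta_{\alpha,p})$ by the target second moment $F_\alpha(\psi^2)$ \emph{with constants uniform in the resolution index $\alpha$}. This is exactly where the two-sided bounds of Assumption~\ref{ass:G1} are indispensable, and one has to carry the induced multiplicative constants cleanly through the $J$ selection--mutation steps (harmless, as $J$ is fixed). Everything else is bookkeeping, the decisive point being the diagonal-support identity that turns $F_\alpha(\psi_{\zeta_\alpha}^2)$ into the stated mixed-difference integral.
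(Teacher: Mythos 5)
The paper does not actually prove Theorem~\ref{thm:mainshangda} in-house; it defers entirely to \cite{law2022multi}, and your argument reconstructs precisely the route taken there: the Del Moral local-error (martingale-increment) decomposition of the unnormalized particle measure, uniform-in-$\alpha$ control of the Feynman--Kac semigroup and normalizing constants via the two-sided potential bounds of Assumption~\ref{ass:G1}, and the diagonal-support identity ${\bf L}_\alpha(\bx)\psi_{\zeta_\alpha}(\bx)=\Delta(L_\alpha(\bx_1)\zeta_\alpha(\bx_1))$ that converts $F_\alpha(\psi_{\zeta_\alpha}^2)$ into the stated mixed-difference integral. The sketch is correct and essentially the same approach; the only detail left implicit (unrolling $\bbE[\Phi_{\alpha,p}(\Pi^N_{\alpha,p-1})(g)]\leq C\,\eta_{\alpha,p}(g)$ back to the initial i.i.d.\ sample) goes through under the same two-sided bounds.
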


\begin{proof} 
The result is proven in \cite{law2022multi}.
\end{proof}

\subsection{Random sample size version}

Consider drawing $N/N_{\rm min}$ i.i.d. samples
$\alpha_i \sim {\bf p}$, where ${\bf p}$ is a probability distribution on $\bbZ_+^D$ with 
${\bf p}(\alpha) =: p_{\alpha} > 0$, to be specified later.
Define the allocations ${\bf A} \in \bbZ_+^{D \times N/N_{\rm min}}$ 
by ${\bf A}_i = \alpha_i$, and the (scaled) counts for each $\alpha \in \bbZ_+^D$
by $N_\alpha = N_{\rm min} \#\{ i ; \alpha_i = \alpha\} \in \bbZ_+$, collectively 
denoted ${\bf N}$. 
Note that $\bbE N_\alpha = N p_\alpha$ and $N_\alpha \rightarrow N p_\alpha$.

Now consider constructing a MISMC estimator of the type  
in \eqref{eq:mismc} using a random number $N_\alpha$ of samples, 
$F_\alpha^{N_\alpha}(\psi_{\zeta_\alpha})$, and recall the 
properties \eqref{eq:coupledbias} and Theorem \ref{thm:mainshangda}.
Conditioned on ${\bf A}$, or equivalently conditioned on ${\bf N}$,
these properties still hold.
For $\zeta: \mathsf{X} \rightarrow \bbR$ define the estimator
\begin{equation}\label{eq:rmismc}
\widehat{F}^{\rm rMI}(\zeta) := 
\sum_{\alpha \in \bbZ_+^D} \frac{N_\alpha}{N p_\alpha}  
F_\alpha^{N_\alpha}(\psi_{\zeta_\alpha}) \, .
\end{equation}

\subsection{Theoretical results}
\label{sec:theory}

The following standard MISMC assumptions will be made.

\begin{ass}\label{ass:rate}
For any $\zeta: \sX \rightarrow \bbR$ bounded and Lipschitz, there exist $C, \beta_i, s_i, \gamma_i >0$ for $i=1,\dots, D$ such that for resolution vector $(2^{-\alpha_1},\dots, 2^{-\alpha_D})$, i.e. resolution $2^{-\alpha_i}$ in the $i^{\rm th}$ direction, the following holds
\begin{itemize}
\item[(B)] $\vert \Delta f_\alpha(\zeta) \vert =: B_{\alpha} \leq C \prod_{i=1}^D 2^{-\alpha_i s_i}$;
\item[(V)] $\int_\sX (\Delta( L_\alpha(x)\zeta_\alpha(x) ))^2 \pi_0(dx) =: V_{\alpha} \leq C \prod_{i=1}^D 2^{-\alpha_i \beta_i}$;
\item[(C)] ${\rm COST}(F_\alpha(\psi_{\zeta})) 
\propto \prod_{i=1}^D 2^{\alpha_i \gamma_i}$.
\end{itemize}
\end{ass}

First, we need to examine the bias of the estimator \eqref{eq:rmismc}.

\begin{prop}\label{prop:unbiased}
Assume \ref{ass:G1}, and let $\zeta: \sX \rightarrow \bbR$. For any multi-index $\alpha$, we have $p_{\alpha} >0$. 
Then the randomized MISMC estimator \eqref{eq:rmismc}
is free from discretization bias, i.e.
\begin{equation}\label{eq:runbiased}
\bbE [ \widehat{F}^{\rm rMI}(\zeta) ] = f(\zeta) \, .
\end{equation}
\end{prop}
\begin{proof}
The proof is given in Appendix \ref{app:prop_unbiased}.
\end{proof}

Now that unbiasedness has been established, the next step is to examine the variance.
\begin{prop}\label{prop:variance}
Assume \ref{ass:G1} and \ref{ass:rate}, and let $\zeta: \sX \rightarrow \bbR$. For any multi-index $\alpha$, we have $p_{\alpha} >0$. Then the variance of the randomized MISMC estimator \eqref{eq:rmismc}
is given by
\begin{equation}
\bbE [ (\widehat{F}^{\rm rMI}(\zeta) - f(\zeta))^2 ] \leq  \frac{C}{N} \Bigg(
\sum_{\alpha \in \bbZ_{+}^{D}} \frac1{p_\alpha} \prod_{i=1}^D 2^{-\alpha_i \beta_i} 
+ \sum_{\alpha' \neq \alpha \in \bbZ_{+}^{D}} \prod_{i=1}^D 2^{-\alpha_i \beta_i/2}  
\prod_{j=1}^D 2^{-\alpha_j' \beta_j/2} \Bigg).\end{equation}
In particular, if 
$$
\sum_{\alpha \in \bbZ_{+}^{D}} \frac1{p_\alpha} \prod_{i=1}^D 2^{-\alpha_i \beta_i} \leq C \, ,
$$
then one has the canonical convergence rate.
\end{prop}

\begin{proof}
The proof is given in Appendix \ref{app:prop_variance}.
\end{proof}

The randomized MISMC ratio estimator is now defined
for $\varphi: \sX \rightarrow \bbR$ by
\begin{equation}\label{eq:rmismcrat}
\widehat{\varphi}^{\rm rMI} := 
\frac{\widehat{F}^{\rm rMI}(\varphi)}{\max\{\widehat{F}^{\rm rMI}(1),Z_{\rm min}\}} \, ,
\end{equation}
where $\widehat{F}^{\rm rMI}$ is defined in \eqref{eq:rmismc}.

Before presenting the main result of the present work, 
we first recall the main result of \cite{law2022multi} which is derived by Theorem~\ref{thm:mainshangda}. \cite{law2022multi} considers two index sets for the original MISMC ratio estimator, tensor product index set and total degree index set. Compared to the tensor product index set, the total degree index set abandons some expensive indices, with much looser conditions in the convergence theorem. The convergence result for tensor product index set is given in Theorem~\ref{thm:mainold_tp} of \cite{law2022multi} 
and for total degree index set it is given in Theorem~\ref{thm:mainold_td} of \cite{law2022multi}.

\begin{theorem}\label{thm:mainold_tp}
Assume \ref{ass:G1} and \ref{ass:rate}, with $\sum_{j=1}^D \frac{\gamma_j}{s_j} \leq 2$ 
and $\beta_i>\gamma_i$ for $i=1,\dots,D$. 
Then for any $\varepsilon>0$ and bounded and Lipschitz $\varphi: \sX \rightarrow \bbR$, it is possible to choose a tensor product index set $\cI_{L_1:L_D} := \{\alpha \in \bbZ_{+}^{D}:\alpha_1\in\{0,...,L_1\},...,\alpha_D \in \{0,...,L_D\}\}$ and $\{N_\alpha\}_{\alpha \in \cI_{L_1:L_D}}$,
such that for some $C>0$ that is independent of $\varepsilon$
$$
\bbE [ ( \widehat \varphi^{\rm MI}_{\cI_{L_1:L_D}}  - \pi(\varphi) )^2 ] \leq C \varepsilon^2 \, ,
$$
and ${\rm COST}(\widehat \varphi^{\rm MI}_{\cI_{L_1:L_D}} ) \leq C \varepsilon^{-2}$, the canonical rate.
The estimator $\widehat \varphi^{\rm MI}_{\cI_{L_1:L_D}} $ is defined in equation \eqref{eq:mismc}.
\end{theorem}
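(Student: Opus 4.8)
The plan is to reduce the analysis of $\widehat\varphi^{\rm MI}_\cI - \pi(\varphi)$ to separate control of the numerator and denominator estimators $\widehat{F}^{\rm MI}(\varphi) := \sum_{\alpha\in\cI} F^{N_\alpha}_\alpha(\psi_{\varphi_\alpha})$ and $\widehat{F}^{\rm MI}(1) := \sum_{\alpha\in\cI} F^{N_\alpha}_\alpha(\psi_{1})$, and then to optimize the sample allocation $\{N_\alpha\}$ against cost. First I would note the elementary bound, valid whenever $\widehat{F}^{\rm MI}(1)$ stays above $Z_{\rm min}$ and $Z \ge Z_{\rm min}$, of the form
\begin{equation*}
| \widehat\varphi^{\rm MI}_\cI - \pi(\varphi) | \le \frac{1}{Z_{\rm min}} | \widehat{F}^{\rm MI}(\varphi) - f(\varphi) | + \frac{|\pi(\varphi)|}{Z_{\rm min}} | \widehat{F}^{\rm MI}(1) - f(1) | \, ,
\end{equation*}
so that after squaring and taking expectations it suffices to obtain $\bbE[(\widehat{F}^{\rm MI}(\zeta) - f(\zeta))^2] \le C\varepsilon^2$ for $\zeta \in \{\varphi, 1\}$ and a matching cost bound. (Under Assumption \ref{ass:G1}, $f(1)=Z$ is bounded above and below, and one can either argue $\widehat{F}^{\rm MI}(1) \ge Z_{\rm min}$ with overwhelming probability or simply absorb the clipping, since it can only decrease the error when $Z > Z_{\rm min}$.)

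Next I would do the standard bias--variance split on each $\widehat{F}^{\rm MI}(\zeta)$. The bias is deterministic: since each SMC estimator is unbiased for $\Delta f_\alpha(\zeta_\alpha)$ by \eqref{eq:coupledbias}, we get $\bbE \widehat{F}^{\rm MI}(\zeta) = \sum_{\alpha\in\cI}\Delta f_\alpha(\zeta_\alpha)$, whose deviation from $f(\zeta) = \sum_{\alpha\in\bbZ_+^D}\Delta f_\alpha(\zeta_\alpha)$ is the tail sum over $\alpha\notin\cI_{L_1:L_D}$. Summing the telescoping structure, this tail is controlled by the $(B)$ rates in Assumption \ref{ass:rate}, giving bias $\lesssim \sum_{i=1}^D 2^{-L_i s_i}$ (up to polynomial-in-$L$ factors absorbable into the rates), so one chooses each $L_i$ so that $2^{-L_i s_i} \asymp \varepsilon$. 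For the variance, independence of the SMC samplers across $\alpha$ gives $\bbV[\widehat{F}^{\rm MI}(\zeta)] = \sum_{\alpha\in\cI}\bbV[F^{N_\alpha}_\alpha(\psi_{\zeta_\alpha})]$, and Theorem \ref{thm:mainshangda} bounds each term by $C V_\alpha / N_\alpha \le C N_\alpha^{-1}\prod_i 2^{-\alpha_i\beta_i}$ using the $(V)$ rate. The total cost is $\sum_{\alpha\in\cI} N_\alpha C_\alpha$ with $C_\alpha \asymp \prod_i 2^{\alpha_i\gamma_i}$ from $(C)$.

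The core of the proof is then the constrained optimization: minimize $\sum_{\alpha\in\cI} N_\alpha C_\alpha$ subject to $\sum_{\alpha\in\cI} V_\alpha/N_\alpha \le \varepsilon^2$. The Lagrange/Cauchy--Schwarz argument gives the classical choice $N_\alpha \propto \varepsilon^{-2}\sqrt{V_\alpha/C_\alpha}\sum_{\beta\in\cI}\sqrt{V_\beta C_\beta}$, yielding total cost $\asymp \varepsilon^{-2}\bigl(\sum_{\alpha\in\cI}\sqrt{V_\alpha C_\alpha}\bigr)^2 \asymp \varepsilon^{-2}\bigl(\sum_{\alpha\in\cI}\prod_i 2^{-\alpha_i(\beta_i-\gamma_i)/2}\bigr)^2$. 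Here the hypothesis $\beta_i > \gamma_i$ makes each geometric sum over $\alpha_i$ converge, so the bracket is $O(1)$ uniformly in $\cI$, and the total cost is $O(\varepsilon^{-2})$ — the canonical rate — provided only that the truncation levels $L_i$ needed for the bias do not inflate this; that is exactly where the condition $\sum_j \gamma_j/s_j \le 2$ enters, ensuring that the cost of even the single most expensive index $\alpha = (L_1,\dots,L_D)$, namely $\prod_i 2^{L_i\gamma_i} \asymp \prod_i \varepsilon^{-\gamma_i/s_i} = \varepsilon^{-\sum_i\gamma_i/s_i}$, stays within the $\varepsilon^{-2}$ budget. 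I expect the main obstacle to be the bookkeeping in the bias tail bound — making the telescoping cancellations over the multi-dimensional index complement of a tensor-product box rigorous, and checking that the polylogarithmic factors in the number of indices at each "level" are genuinely dominated by the geometric decay rather than swept under the rug — together with verifying that $N_\alpha$ rounded to a positive integer does not break the cost estimate (handled by adding $1$ to each $N_\alpha$ and noting $\sum_{\alpha\in\cI} C_\alpha$ is itself $O(\varepsilon^{-\sum_i\gamma_i/s_i}) = O(\varepsilon^{-2})$ under the stated condition). These are all referenced as being carried out in \cite{law2022multi}, so in the present paper the proof would simply cite that work.
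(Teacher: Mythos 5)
Your proposal is correct and is essentially the argument the paper relies on: the paper itself gives no proof here but defers entirely to \cite{law2022multi}, and your sketch (ratio decomposition with the $Z_{\rm min}$ clipping, unbiasedness plus tensor-product telescoping for the bias, Theorem \ref{thm:mainshangda} with Assumption \ref{ass:rate}(V) for the variance, and the standard $N_\alpha \propto \sqrt{V_\alpha/C_\alpha}$ allocation) is precisely the analysis carried out there. The only minor refinement is that for the tensor-product box the telescoping is exact, so the bias equals $|f_{(L_1,\dots,L_D)}(\zeta) - f(\zeta)| \leq C\prod_{i=1}^D 2^{-L_i s_i}$, a product rather than the sum you wrote; your weaker bound still yields the same choice of $L_i$ and the same conclusion.
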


\begin{proof}
The proof is given in \cite{law2022multi}.
\end{proof}

\begin{theorem}\label{thm:mainold_td}
Assume \ref{ass:G1} and \ref{ass:rate}, with $\beta_i>\gamma_i$ for $i=1,\dots,D$. 
Then for any $\varepsilon>0$ and bounded and Lipschitz $\varphi: \sX \rightarrow \bbR$, it is possible to choose a total degree index set $\cI_{L} := \{\alpha \in \bbZ_{+}^{D}:  \sum_{i=1}^{D} \delta_i\alpha_i \leq L, \sum_{i=1}^{D} \delta_i=1 \}$, $\delta_i \in (0,1]$ and $\{N_\alpha\}_{\alpha \in \cI_{L_1:L_D}}$,
such that for some $C>0$ that is independent of $\varepsilon$
$$
\bbE [ ( \widehat \varphi^{\rm MI}_{\cI_{L}}  - \pi(\varphi) )^2 ] \leq C \varepsilon^2 \, ,
$$
and ${\rm COST}(\widehat \varphi^{\rm MI}_{\cI_{L}} ) \leq C \varepsilon^{-2}$, the canonical rate.
The estimator $\widehat \varphi^{\rm MI}_{\cI_{L}} $ is defined in equation \eqref{eq:mismc}.
\end{theorem}
\begin{proof}
The proof is given in \cite{law2022multi}.
\end{proof}

\begin{theorem}\label{thm:main}
Assume \ref{ass:G1} 
and \ref{ass:rate} (V,C), with $\beta_i>\gamma_i$ for $i=1,\dots,D$. Then, for bounded and Lipschitz $\varphi: \sX \rightarrow \bbR$, it is possible to choose a probability distribution ${\bf p}$ on $\bbZ_+^D$ such that for some $C>0$ that is independent of $N$
$$ 
\bbE [ ( \widehat \varphi^{\rm rMI} - \pi(\varphi) )^2 ] \leq \frac{C}{N} \, ,
$$
and expected ${\rm COST}(\widehat \varphi^{\rm rMI}) \leq C N$, i.e. the canonical rate.
The estimator $\widehat \varphi^{\rm rMI}$ is defined in equation \eqref{eq:rmismcrat}.
\end{theorem}

\begin{proof}
The proof is given in Appendix \ref{app:thm_main}.
\end{proof}

The noticeable differences in Theorem \ref{thm:main} with respect to Theorem \ref{thm:mainold_tp} and \ref{thm:mainold_td} are that (i) discretization bias does not appear and so the bias rates as in Assumption \ref{ass:rate} (B) are not required, nor is the constraint related to them shown in Table \ref{table:canonical_cond}, and (ii) no index set $\cI$ needs to be selected since the estimator sums over $\alpha \in \bbZ_+^D$ (noting that many of these indices do not get populated).

\begin{table}[h]
\centering
\begin{tabular}{ |c|c|c|}
    \hline
      \diagbox[width=9em,height=2em]{Methods}{Conditions} &  $\sum_{i=1}^{D} \frac{\gamma_{i}}{s_i} \leq 2$ & Bias tuning\\ 
    \hline
    rMISMC & no & no \\ 
    \hline
    MISMC with TD & no & yes \\
   \hline
   MISMC with TP & yes & yes \\
   \hline
\end{tabular}
\caption{Comparison of {\em non-desirable} 
constraints required to achieve canonical complexity 
with MISMC and rMISMC.}
\label{table:canonical_cond}
\end{table}

\section{Numerical results}
\label{sec:numerics}

The problems considered here are the same as in \cite{law2022multi}, and we intend to compare our rMISMC ratio estimator with the original MISMC ratio estimator.

The codes used for the numerical results in this paper can be found in \url{https://github.com/liangxinzhu/rMISMCRE.git}.

\subsection{Verification of assumption}

Discussions in connection with the required Assumption~\ref{ass:rate} for the 2D PDE and 2D LGP models are revisited here. Verification of the 1D PDE model is naturally satisfied according to the discussion of the 2D PDE model. Propositions \ref{prop:likelihood}, 
\ref{prop:likelihood_pde} and \ref{prop:likelihood_lgp} 
and their proofs are given in \cite{law2022multi}. 

We define the mixed Sobolev-like norms as
\begin{equation}
    \norm{x}_{q}:=\norm{A^{q/2}x},
\end{equation}
where  $A=\sum_{k\in \bbZ^2} a_k\phi_k \otimes \phi_k$, 
for the orthonormal basis $\{\phi_k\}$ defined above \eqref{eqn:spectral_decay}, $a_k = k_1^2k_2^2$, and $\norm{\cdot}$ is the $L^2(\Omega)$ norm. Note that the approximation of the posteriors of the motivating problems have the form $\exp(\Phi_{\alpha}(x))$ for some $\Phi_{\alpha}:\sX\rightarrow \bbR$.

\begin{prop}\label{prop:likelihood}
Let $\sX$ be a Banach space with $D=2$ s.t. $\pi_0(\sX)=1$, with norm $\norm{\cdot}_{\sX}$. For all $\epsilon > 0$, there exists a $C(\epsilon) > 0$ such that the following holds for $\Phi_{\alpha} = \log(L_{\alpha})$ given by \eqref{eqn:finite_likelihood_lgp} or \eqref{eqn:finite_likelihood_pde}, respectively:
\begin{equation}
    \Delta \exp(\Phi(x_{\alpha})) \leq C(\epsilon)\exp(\epsilon\norm{x}_{\sX}^2)\big(\vert \Delta \Phi_{\alpha}(x_{\alpha})\vert 
+ \vert \Delta_1 \Phi_{\alpha-e_2}(x_{\alpha-e_2})\vert \vert \Delta_2 \Phi_{\alpha-e_1}(x_{\alpha-e_1})\vert \big).\end{equation}
\end{prop}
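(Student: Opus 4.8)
The plan is to reduce the mixed difference of $\exp(\Phi_\alpha)$ to an elementary identity in the mixed differences of $\Phi_\alpha$ itself, and then to absorb the residual exponential prefactor using the structure of the two likelihoods. Since $D=2$ here, write (with the mild abuse that all four terms are evaluated at the common argument fixed by the Dirac prior coupling $\Pi_0$)
\[
\Phi_{11}:=\Phi_\alpha,\quad \Phi_{01}:=\Phi_{\alpha-e_1},\quad \Phi_{10}:=\Phi_{\alpha-e_2},\quad \Phi_{00}:=\Phi_{\alpha-e_1-e_2},
\]
and set $p:=\Phi_{10}-\Phi_{00}=\Delta_1\Phi_{\alpha-e_2}$, $q:=\Phi_{01}-\Phi_{00}=\Delta_2\Phi_{\alpha-e_1}$, $r:=\Phi_{11}-\Phi_{10}-\Phi_{01}+\Phi_{00}=\Delta\Phi_\alpha$. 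Then $\Phi_{11}=\Phi_{00}+p+q+r$, and a direct computation gives
\[
\Delta\exp(\Phi_\alpha)=e^{\Phi_{00}}\bigl(e^{p+q+r}-e^{p}-e^{q}+1\bigr)
=e^{\Phi_{00}}\bigl[\,e^{p+q}(e^{r}-1)+(e^{p}-1)(e^{q}-1)\,\bigr].
\]

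Next I would apply the elementary bound $|e^{t}-1|\le |t|e^{|t|}$ to each factor, obtaining
\[
\bigl|\Delta\exp(\Phi_\alpha)\bigr|
\le e^{\Phi_{00}+|p|+|q|+|r|}\bigl(|r|+|p|\,|q|\bigr)
= e^{\Phi_{00}+|p|+|q|+|r|}\Bigl(|\Delta\Phi_\alpha|+|\Delta_1\Phi_{\alpha-e_2}|\,|\Delta_2\Phi_{\alpha-e_1}|\Bigr).
\]
Since $\Phi_{00}+|p|+|q|+|r|\le C\max_{i,j}|\Phi_{ij}|$, the proposition follows once we establish the uniform linear-growth estimate $|\Phi_\beta(x)|\le C(1+\norm{x}_X)$ for every resolution index $\beta$, and then invoke Young's inequality $C(1+\norm{x}_X)\le \epsilon\norm{x}_X^2+C(\epsilon)$, which bounds the prefactor by $C(\epsilon)\exp(\epsilon\norm{x}_X^2)$ for arbitrarily large $\norm{x}_X$.

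It then remains to prove this linear bound for the two models. For the PDE likelihood $\Phi_\alpha=\log L_\alpha(x)=-\tfrac12|y-\cG(u_\alpha(x))|^2_\Sigma\le 0$, so only a lower bound is needed; the uniform-ellipticity hypothesis $a_0>\sum_i x_i$ with $\norm{\psi_i}_\infty\le 1$ forces $a(x)(z)\ge a_{\min}>0$ uniformly on $\mathrm{supp}\,\pi_0$, whence Lax--Milgram bounds $\norm{u_\alpha(x)}$, and therefore $|\cG(u_\alpha(x))|$, uniformly in $x$ and $\alpha$; thus $\Phi_\alpha$ is in fact uniformly bounded, which is more than enough. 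For the LGP likelihood $\Phi_\alpha(x)=\sum_{i=1}^n\hat x_\alpha(z_i)-n\log Q(\exp(x_\alpha))$, the first sum is controlled by $n\norm{\hat x_\alpha}_\infty$, which is $\le C\norm{x_\alpha}_X\le C\norm{x}_X$ by stability of the FFT interpolant and of the KL-truncation in $\norm{\cdot}_X$ together with the embedding $X\hookrightarrow C^0$; for the second term, positivity of the quadrature $Q$ (with weights bounded below and summing to a fixed constant) gives $e^{-\norm{x_\alpha}_\infty-C}\le Q(\exp(x_\alpha))\le e^{\norm{x_\alpha}_\infty+C}$, so $|\log Q(\exp(x_\alpha))|\le \norm{x_\alpha}_\infty+C\le C\norm{x}_X+C$. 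Combining the two contributions yields $|\Phi_\alpha(x)|\le C(1+\norm{x}_X)$, completing the argument.

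The routine parts are the algebraic identity and the scalar inequality; the substantive step, and the main obstacle, is the uniform-in-$\alpha$ linear bound on $\Phi_\beta$, and within it the LGP normalizing term: one must ensure the quadrature cannot make $Q(\exp(x_\alpha))$ exponentially small (positivity together with a uniform lower bound on the weights) and that $\norm{\hat x_\alpha}_\infty$ is comparable to $\norm{x}_X$ uniformly in the truncation level, which is precisely what dictates the choice of the Banach space $X$ with $\pi_0(X)=1$ and $X\hookrightarrow C^0$. The $\exp(\epsilon\norm{x}_X^2)$ factor in the statement is exactly what allows this merely linear-in-norm bound to be absorbed.
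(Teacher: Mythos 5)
Your argument is correct: the exact identity $\Delta\exp(\Phi_\alpha)=e^{\Phi_{00}}\bigl[e^{p+q}(e^{r}-1)+(e^{p}-1)(e^{q}-1)\bigr]$, the scalar bound $|e^{t}-1|\le|t|e^{|t|}$, and the absorption of a linear-in-$\norm{x}_X$ bound on $\Phi_\beta$ into $C(\epsilon)\exp(\epsilon\norm{x}_X^2)$ via Young's inequality is precisely the decomposition that produces the two terms $|\Delta\Phi_\alpha|$ and $|\Delta_1\Phi_{\alpha-e_2}||\Delta_2\Phi_{\alpha-e_1}|$ in the statement. The paper itself does not reproduce a proof (it defers to the cited MISMC reference), but your route is essentially the standard one used there, including the correct identification of the substantive step as the uniform-in-$\alpha$ control of $\Phi_\beta$ (trivial for the PDE likelihood, and requiring the sup-norm stability of the truncation and the positivity of the quadrature for the LGP normalizing term).
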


The variance rate required in Assumption~\ref{ass:rate} (V) for PDE and LGP models are verified following Proposition~\ref{prop:likelihood_pde} and~\ref{prop:likelihood_lgp}. However, it is difficult to give theoretical verification for the variance rate in the LGC model. Since it involves a factor of double exponentials, like $\exp(\int \exp(x(z))dz)$, the Fernique Theorem does not guarantee that such a term is finite. Instead we verify it numerically, which is given in the Appendix \ref{app:2}.

\begin{prop}\label{prop:likelihood_pde}
Let $u_{\alpha}$ be the solution of~(\ref{eqn:pde}) at the resolution $\alpha$, as described in Section~\ref{sec:pde_finite}, for $a(x)$ given by~(\ref{eqn:coef_pde}) and uniformly over $x \in [-1,1]^d$, and ${\sf f}(x) \in L^2(\Omega)$. Then there exists a $C>0$ such that
\begin{equation}
    \norm{\Delta u_{\alpha}(x)} \leq C 2^{-2(\alpha_1 + \alpha_2)}.
\end{equation}
\end{prop}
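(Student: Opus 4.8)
The plan is to establish the mixed-difference decay estimate $\norm{\Delta u_\alpha(x)} \leq C 2^{-2(\alpha_1+\alpha_2)}$ by combining two classical ingredients: (i) a full-order finite-element error estimate for the piecewise-bilinear Galerkin approximation of a smooth-coefficient, smooth-data elliptic PDE, which gives an $O(K^{-2})$ bound in the $L^2$ norm, i.e. $O(2^{-2\alpha_i})$ in each direction; and (ii) a tensor-product/mixed-regularity argument showing that the mixed difference $\Delta u_\alpha = u_\alpha - u_{\alpha-e_1} - u_{\alpha-e_2} + u_{\alpha-e_1-e_2}$ decays like the product of the one-directional rates. The coefficient $a(x)(z) = a_0 + \sum_i x_i\psi_i(z)$ is smooth in $z$ with $a_0 > \sum_i x_i$ ensuring uniform ellipticity over $x\in[-1,1]^d$, so $u(x)$ has high spatial regularity (in fact mixed regularity, since ${\bf f}$ is assumed suitably smooth and $\Omega = [0,1]^2$ is a square), uniformly in $x$; this uniformity is what lets the constant $C$ be independent of $x$.

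First I would set up the weak formulation and the Galerkin projection $u_\alpha = P_\alpha u$ onto the bilinear FE space $V_\alpha = V_{1,\alpha}\otimes V_{2,\alpha}$ associated with the tensor grid of mesh sizes $2^{-\alpha_1}, 2^{-\alpha_2}$. Because the space is a tensor product, the key structural fact is that (up to the variational-crime coming from the non-tensor coefficient $a$) the projection nearly factorizes; the standard device is to write $\Delta u_\alpha$ using the one-dimensional difference operators $\Delta_i$ acting on the projections and to invoke an Aubin--Nitsche / duality argument twice, once in each coordinate, to pick up one factor of $2^{-2\alpha_i}$ per direction. Concretely, $\Delta_1 u_\alpha = (I - P_{1,\alpha_1-1}^{(1)})$ applied appropriately, giving $O(2^{-2\alpha_1})$; then applying $\Delta_2$ to that quantity and using that the function being projected still has enough $z_2$-regularity (uniformly, thanks to mixed elliptic regularity of $u(x)$ and smoothness of $a$) yields the additional $O(2^{-2\alpha_2})$, for the product bound $2^{-2(\alpha_1+\alpha_2)}$.

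I expect the main obstacle to be handling the non-tensor-product structure of the stiffness form: since $a(x)(z)$ is a general smooth function of $z$, the bilinear form $\int a \nabla u\cdot\nabla v\,dz$ does not split as a sum of tensor-product forms, so the Galerkin projection $P_\alpha$ is not literally $P_{1,\alpha_1}\otimes P_{2,\alpha_2}$ and the mixed difference does not factor exactly. The fix is to quantify the discrepancy: write the error as the sum of the error of an auxiliary tensor-product (e.g. $L^2$- or energy-) projection plus a consistency term, and show the consistency term also enjoys the mixed decay because $a$ and its derivatives are bounded and $u(x)$ lies in a mixed Sobolev space $H^{2,2}$ uniformly in $x$. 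This is precisely the kind of mixed-regularity FE analysis underlying sparse-grid and MIMC convergence theory, so I would lean on those standard arguments rather than reproving them, and would cite the elliptic regularity (uniform in $x$) together with the tensor-product approximation estimates. The boundedness and Lipschitz properties of $\zeta$ that appear elsewhere play no role here; the statement is purely about the PDE solution map, so the proof is self-contained modulo these classical FE facts.
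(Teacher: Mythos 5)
The paper does not actually prove this proposition: it is stated and the proof is deferred entirely to the reference \cite{law2022multi} (``Proposition \ref{prop:likelihood_pde} and \ref{prop:likelihood_lgp} and their proofs are given in \cite{law2022multi}''), so there is no in-paper argument to compare against line by line. That said, your outline is the standard mixed-regularity finite-element argument that underlies such MIMC rate verifications, and it is consistent in spirit with what the cited work does: uniform ellipticity from $a_0>\sum_i x_i$ gives $x$-uniform constants, Aubin--Nitsche gives the $O(2^{-2\alpha_i})$ single-direction rate, and the product rate for $\Delta u_\alpha$ comes from the tensor structure of the approximation space together with mixed Sobolev regularity of $u(x)$.

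The one place where your plan stops short of a proof is exactly the step you flag as the ``main obstacle.'' Writing $\Delta u_\alpha=-\Delta(u-u_\alpha)$ and invoking quasi-optimality of each $u_\alpha$ separately only yields the worse of the two single-direction rates, not their product; the product rate requires showing that the signed sum of the four Galerkin errors exhibits genuine cancellation. Your proposed fix --- split $u-u_\alpha$ into a tensor-product interpolation/projection error (which factorizes and gives $2^{-2\alpha_1}2^{-2\alpha_2}$ under $u\in H^{2,2}_{\rm mix}$) plus a consistency remainder, and show the remainder inherits the same decay --- is the right device, but asserting that the remainder ``also enjoys the mixed decay because $a$ and its derivatives are bounded'' is precisely the content of the proposition rather than a consequence of standard one-level FE theory. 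To close it you would need a duality argument applied to the \emph{difference of Galerkin projections} across neighbouring levels (or an equivalent superconvergence estimate for $\Pi_\alpha u - u_\alpha$), carried out once per coordinate direction, with constants tracked uniformly in $x\in[-1,1]^d$. So: correct strategy, same route the literature takes, but the decisive estimate is named rather than established.
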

Since $L_{\alpha}(x) \leq C < \infty$ in the PDE problem by Assumption \ref{ass:G1}, the constant in Proposition~\ref{prop:likelihood} can be made uniform over $x$, so the variance rate in Assumption~\ref{ass:rate} is obtained.

\begin{prop}\label{prop:likelihood_lgp}
Let $x \sim \pi_0$, where $\pi_0$ is a Gaussian process of the form~(\ref{eqn:gaussian_pro}) with spectral decay corresponding to~(\ref{eqn:spectral_decay}), and let $x_{\alpha}$ correspond to truncation on the index set $\cA_{\alpha} = \cap_{i=1}^2\{ \vert k_i \vert \leq 2^{\alpha_i} \}$ as in~(\ref{eqn:gaussian_pro_app}). Then there is a $C>0$ such that for all $q<(\beta-1)/2$
\begin{equation}
    \norm{\Delta x_{\alpha}}^2 \leq C \norm{x}_{q}^2 2^{-2q(\alpha_1 + \alpha_2) }.
\end{equation}
Furthermore, this rate is inherited by the likelihood with $\beta_i=\beta$.
\end{prop}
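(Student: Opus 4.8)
The plan is to work entirely in the Fourier/Karhunen--Lo\`eve coordinates in which the prior (\ref{eqn:gaussian_pro}) is diagonal, to identify exactly which modes survive the mixed difference operator $\Delta = \Delta_1 \circ \Delta_2$, and then to estimate the resulting $L^2(\Omega)$ norm against the weighted norm $\norm{x}_q$ by exploiting that the surviving modes carry large frequencies in both coordinates simultaneously. Writing $x = \theta_1 + \sum_k \hat x_k \phi_k$ with $\hat x_k = \zeta_k(\theta)\xi_k$, so that $|\hat x_k|^2 = \zeta_k^2 |\xi_k|^2$ (using $\zeta_{-k}=\zeta_k$ and $|\xi_k^*|=|\xi_k|$), Parseval gives $\norm{x}_q^2 = \sum_k a_k^q |\hat x_k|^2$ with $a_k = k_1^2 k_2^2$, and likewise $\norm{\Delta x_\alpha}^2$ is a sum of $|\hat x_k|^2$ over whichever modes actually appear in $\Delta x_\alpha$.

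First I would pin down those modes. The truncation $\cA_\alpha = \{|k_1|\le 2^{\alpha_1}\}\cap\{|k_2|\le 2^{\alpha_2}\}$ keeps a frequency rectangle, so each of the four terms in $\Delta x_\alpha = x_\alpha - x_{\alpha-e_1} - x_{\alpha-e_2} + x_{\alpha-e_1-e_2}$ retains mode $k$ with coefficient $\pm\hat x_k$ according to whether $k$ lies in the corresponding rectangle. Encoding membership by the indicators $a=\mathbb{1}[|k_1|\le 2^{\alpha_1}]$, $a'=\mathbb{1}[|k_1|\le 2^{\alpha_1-1}]$, $b=\mathbb{1}[|k_2|\le 2^{\alpha_2}]$, $b'=\mathbb{1}[|k_2|\le 2^{\alpha_2-1}]$, the net coefficient of $k$ is $ab-a'b-ab'+a'b'=(a-a')(b-b')$, which is nonzero precisely on the doubly-dyadic set $S_\alpha = \{2^{\alpha_1-1}<|k_1|\le 2^{\alpha_1}\}\times\{2^{\alpha_2-1}<|k_2|\le 2^{\alpha_2}\}$. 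Hence $\norm{\Delta x_\alpha}^2 = c\sum_{k\in S_\alpha} |\hat x_k|^2$ for a normalization constant $c$. When $\alpha_i=0$ the operator $\Delta_i$ acts as the identity and the corresponding annulus degenerates to $\{|k_i|\le 1\}$; since then $2^{-2q\alpha_i}=1$, these finitely many low modes impose no decay and are absorbed into $C$.

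Next I would insert the frequency weight. On $S_\alpha$ one has $k_1^2 k_2^2 > 2^{2(\alpha_1-1)}2^{2(\alpha_2-1)}$, so $a_k^q \ge 2^{-4q}2^{2q(\alpha_1+\alpha_2)}$ and therefore $|\hat x_k|^2 \le 2^{4q} 2^{-2q(\alpha_1+\alpha_2)} a_k^q |\hat x_k|^2$ for every $k\in S_\alpha$. Summing over $S_\alpha$ and then enlarging the sum to all $k$ gives $\norm{\Delta x_\alpha}^2 \le c\,2^{4q}2^{-2q(\alpha_1+\alpha_2)}\sum_k a_k^q|\hat x_k|^2 = C\,\norm{x}_q^2\,2^{-2q(\alpha_1+\alpha_2)}$, which is the claim (absorbing $c\,2^{4q}$ into $C$, as $q$ ranges over a bounded set). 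The constraint $q<(\beta-1)/2$ enters only to guarantee $\norm{x}_q<\infty$: with $\zeta_k^2 \sim (|k_1||k_2|)^{-(\beta+1)}$ from (\ref{eqn:spectral_decay}), the series $\bbE\norm{x}_q^2 = \sum_k a_k^q\zeta_k^2$ factors into one-dimensional sums of $|k_i|^{2q-(\beta+1)}$ and hence converges in this range.

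For the final sentence I would feed this rate into Proposition \ref{prop:likelihood}. With $\Phi_\alpha$ the LGP log-likelihood from (\ref{eqn:finite_likelihood_lgp}), that proposition bounds $\Delta\exp(\Phi_\alpha)$ by $C(\epsilon)\exp(\epsilon\norm{x}_X^2)\big(|\Delta\Phi_\alpha| + |\Delta_1\Phi_{\alpha-e_2}|\,|\Delta_2\Phi_{\alpha-e_1}|\big)$. Squaring, integrating against $\pi_0$, and splitting off the factor $\exp(2\epsilon\norm{x}_X^2)$ by Cauchy--Schwarz (its integral being finite by Fernique's theorem for small $\epsilon$) reduces $V_\alpha$ to fourth moments of $\Delta\Phi_\alpha$ and of the single differences $\Delta_i\Phi$. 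The single difference $\Delta_i x_\alpha$ localizes to an annulus in the $i$-th frequency only, which by the same weight argument (now using the one-directional weight) decays like $2^{-q\alpha_i}$, while the full mixed difference $\Delta x_\alpha$ decays like $2^{-q(\alpha_1+\alpha_2)}$ as just shown; since $\Phi_\alpha$ is Lipschitz in $x_\alpha$ through the point evaluations and the quadrature, these rates pass to $\Phi$. Thus both the product term and the mixed term contribute $2^{-2q\alpha_1}2^{-2q\alpha_2}$, giving $V_\alpha \le C\prod_i 2^{-\alpha_i\beta_i}$ with the rate in each direction governed by $\beta$ (taking $q\uparrow(\beta-1)/2$), which is exactly Assumption \ref{ass:rate}(V). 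The main obstacle is this last step: converting the Gaussian $L^2$ estimates on $x_\alpha$ into uniform higher-moment bounds on the nonlinear functionals $\Delta\Phi_\alpha$ and $\Delta_i\Phi$, and in particular controlling the pointwise evaluations $\hat x_\alpha(z_i)$, which requires a mixed-Sobolev embedding into $C^0$ and is the structural reason a margin is kept in the admissible range of $q$. By comparison the purely linear GP bound is routine once the mode localization $S_\alpha$ has been identified.
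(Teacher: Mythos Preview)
The paper does not actually prove this proposition; it states explicitly that ``Proposition~\ref{prop:likelihood_pde} and~\ref{prop:likelihood_lgp} and their proofs are given in \cite{law2022multi}.'' So there is no in-paper proof to compare your proposal against.

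On its own merits, your argument for the main inequality is correct and is the natural one: the mixed difference $\Delta=\Delta_1\circ\Delta_2$ acting on the rectangular truncations localizes the Fourier support to the high-high corner $S_\alpha=\{2^{\alpha_1-1}<|k_1|\le 2^{\alpha_1}\}\times\{2^{\alpha_2-1}<|k_2|\le 2^{\alpha_2}\}$ (your indicator computation $(a-a')(b-b')$ makes this transparent), and on $S_\alpha$ the weight $a_k^q=(k_1k_2)^{2q}$ is uniformly bounded below by $2^{-4q}2^{2q(\alpha_1+\alpha_2)}$, so Parseval gives the claimed bound immediately. The constraint $q<(\beta-1)/2$ is exactly the range where $\bbE_{\pi_0}\|x\|_q^2=\sum_k a_k^q\zeta_k^2<\infty$, as you note.

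Your handling of the ``furthermore'' clause follows the intended route through Proposition~\ref{prop:likelihood} plus Fernique, and you correctly flag the real work: passing from $L^2$ control of $\Delta x_\alpha$ to control of the nonlinear $\Delta\Phi_\alpha$ (which involves point evaluations $\hat x_\alpha(z_i)$ and a log of a quadrature) requires a sup-norm bound, hence a mixed-Sobolev embedding, which is where the margin in $q$ is consumed. One small imprecision worth noting: the rate you actually derive is $2q$ per direction with $q<(\beta-1)/2$, so strictly less than $\beta-1$, not literally $\beta$; the proposition's phrase ``$\beta_i=\beta$'' is best read as ``the decay rate is governed by the spectral smoothness parameter $\beta$'' rather than as an exact equality.
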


\subsection{1D toy problem}
We consider a 1D PDE toy problem which has already been applied in \cite{jasra2021unbiased} and \cite{law2022multi}. Let the domain be $\Omega=[0,1]$, $a = 1$ and ${\sf f}=x$ in PDE~(\ref{eqn:pde}). This toy PDE problem has an analytical solution, $u(x)(z) = \frac{x}{2}(z^2-z)$. Given the quantity of interest $\varphi(x) = x^2$, we aim to compute the expectation of the quantity of interest $\bbE[\varphi(x)]$. In the following implementation, we take the observations at ten points in the interval $[0,1]$, which are $[0.1, 0.2,...,0.9,1]$, so the observations are generated by
\begin{equation}
    y_i = -0.5x^{*}(z_i^2-z_i) + \nu_i, \text{ for } i=1,...,10,
\end{equation}  
where $[z_1,...,z_{10}] = [0.1, 0.2,...,0.9,1]$, $x^*$ is sampled from $U[-1,1]$ and $\nu_i \sim \cN(0,0.2^2)$. The reference solution is computed as in \cite{jasra2021unbiased} and \cite{law2022multi}. From Figure~\ref{fig:1dtoy_emp}, we have $\alpha = 2$, $\beta = 4$. The value of $\gamma$ is $1$ because we use linear nodal basis functions in FEM and tridiagonal solver.

From Figure~\ref{fig:1dtoy_com}, the convergence behaviour for rMLSMC and MLSMC is nearly the same and the convergence rate for them is approximately $-1$ which is the canonical rate. The difference in performance between (r)MLSMC and SMC is the rate of convergence, where the convergence rate for SMC is approximately $-4/5$. With the same total computational cost, the MSE of (r)MLSMC is larger than SMC until the cost reaches $10^4$. We conclude that (r)MLSMC performs better than SMC in terms of the rate of convergence as expected.

\begin{figure}[H]
  \centering
  \includegraphics{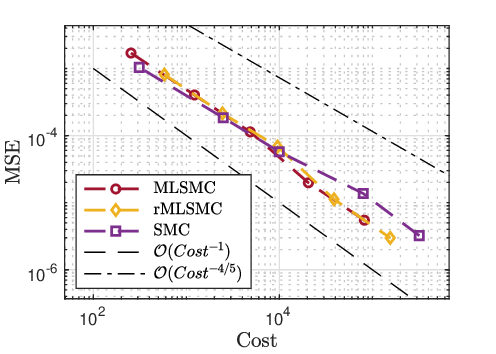} 
\caption{Comparison among three methods for the 1D inverse toy problem. We use 100 realisations to compute the MSE for each experiment and use the rate of convergence to compare the different methods. The red circle line is MLSMC with ratio estimator; the yellow diamond line is rMLSMC with ratio estimator; the purple square line is single-level sequential Monte Carlo. Rate of regression: (1)MLSMC: -1.008; (2)rMLSMC: -1.016; (3)SMC: -0.812}
\label{fig:1dtoy_com}
\end{figure}

\subsection{2D elliptic partial differential equation}

Applying rMLSMC in a 1D analytical PDE problem is only an appetizer, we now focus on applying rMISMC to high-dimensional problems. We now consider a 2D non-analytical elliptic PDE on the domain $\Omega=[0,1]^2$ with ${\sf f} = 100$ and $a(x)$ taking the form as
\begin{equation}
    a(x)(z) = 3+x_1 \cos(3z_1)\sin(3z_2) + x_2 \cos(z_1)\sin(z_2).
\end{equation}

We let the prior distribution be a uniform distribution $U[-1,1]^2$ and set the quantity of interest to be $\varphi(x) =  x_1^2 + x_2^2$, which is a generalisation of the one-dimensional case. We take the observations at a set of four points: \{(0.25, 0.25), (0.25,  0.75), (0.75, 0.25), (0.75, 0.75)\}, and the corresponding observations are given by 
\begin{equation}
    y = u_{\alpha}(x^{*}) + \nu,
\end{equation}
where $u_{\alpha}(x^{*})$ is the approximate solution at $\alpha = [10,10]$, $x^*$ samples from $U[-1,1]^2$ and $\nu \sim \cN(0,0.5^2{\bf I}_{4})$. The 2D PDE solver applied in this report is modified based on a MATLAB toolbox IFISS \cite{elman2007algorithm}.

Due to the zero Dirichlet boundary condition, the solution is zero at $\alpha_i=0$ and $\alpha_i=1$ for $i=1,2$. So we set the starting index as $\alpha_1=\alpha_2 = 2$. From Figure~\ref{fig:2dpde_emp}, we have $s_1=s_2=2$ and $\beta_1=\beta_2=4$ for the mixed rates. Since we use the bilinear basis function and MATLAB backslash code, one has $\gamma_1=\gamma_2=1$.

We consider two index sets in the MISMC approach, which are the tensor product (TP) index set and the total degree (TD) index set. From Figure~\ref{fig:2dpde_com}, MISMC with two different sets and rMISMC have similar convergence behaviour with convergence rate approximately being $-1$. Although we do not show SMC method in Figure~\ref{fig:2dpde_com}, the theoretical convergence rate of SMC will drop from $-4/5$ (1D) to $-2/3$ (2D), whose rate of convergence suffers the curse of dimensionality. Up to now, the convergence behaviour of MISMC (with TP index set or TD index set) and rMISMC is similar when applied to 1D and 2D PDE problems, which both achieve the canonical rates, but we will see a difference in the following two statistical models.

\begin{figure}[H]
  \centering
  \includegraphics{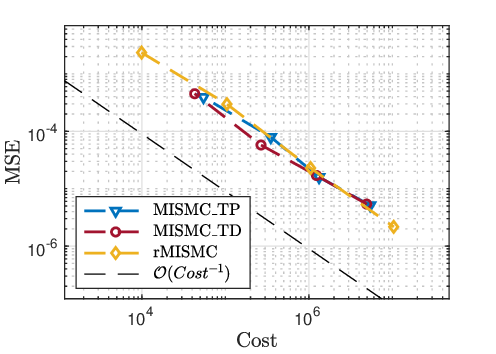} 
\caption{Comparison between two methods for the 2D non-analytical PDE. We use 200 realisations to compute the MSE for each experiment and use the rate of convergence to compare the different methods. The blue triangle line is MISMC with ratio estimator and tensor product set; the red circle line is MISMC with ratio estimator and total degree set; the yellow diamond line is rMISMC with ratio estimator. Rate of regression: (1)MISMC\_TP: -0.964; (2)MISMC\_TD: -0.925; (3)rMISMC: -1.015}
\label{fig:2dpde_com}
\end{figure}

\subsection{Log Gaussian Cox model}
Now, we consider the LGC model introduced in Section~\ref{sec:lgp}.
We set the parameters as $\theta = (\theta_1,\theta_2,\theta_3) = (0,1,(33/\pi)^2)$ in the LGC model. When using rMISMC and MISMC on the 2D log Gaussian Cox model, we need to set the starting level to $\alpha_1 = \alpha_2 = 5$, since the regularity shows up when $\alpha_1 \geq 5$ and $ \alpha_2 \geq 5$. Further, from Figure~\ref{fig:2dlgcp_emp}, we have mixed rates $s_1=s_2=0.8$ and $\beta_1=\beta_2=1.6$. Since we use the FFT for approximation, one has an asymptotic rate for the cost, $\gamma_i=1 + \omega < \beta_i$ for $\omega>0$ and $i=1,2$.

\begin{figure}[H]
  \centering
  \includegraphics{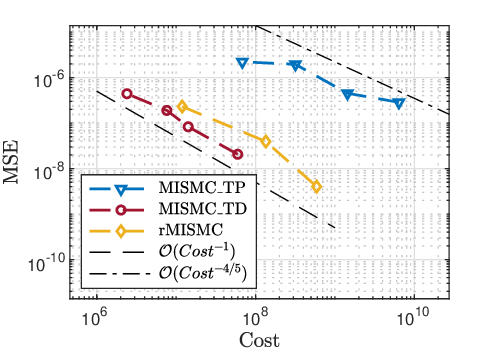} 
  \caption{Comparison between two methods for the 2D LGC model. We use 200 realisations to compute the MSE for each experiment and use the rate of convergence to compare the different methods. The blue triangle line is MISMC with ratio estimator and tensor product set; the red circle line is MISMC with ratio estimator and total degree set; the yellow diamond line is rMISMC with ratio estimator. Rate of regression: (1)MISMC\_TP: -0.502; (2)MISMC\_TD: -0.972; (3)rMISMC: -1.008}
\label{fig:2dlgcp_com}
\end{figure}

The rate of convergence of MISMC TD and rMISMC is approximately $-1$, and both of them achieve the canonical complexity of MSE$^{-1}$. However, the constant for MISMC TD is smaller than rMISMC. We have set a relatively large number of the minimum number of sample, $N_0$, in SMC sampler to alleviate the unexpected high variance caused by the few samples. It is reasonable to expect a higher variance
for the randomized method, however, since it involves infinitely many terms compared to finite. MISMC TD achieves a canonical rate, but MISMC TP only has a sub-canonical rate. This is because the assumption $ \sum_{i=1}^{2} \frac{\gamma_i}{s_i} \leq 2$ is violated ($\sum_{i=1}^{2} \frac{\gamma_i}{s_i} = \frac{5}{2}$) in MISMC, and this assumption is only needed in the tensor product index set, not in the total degree index set. This indicates that an improper choice of an index set in MISMC will result in dropping the canonical rate to the sub-canonical rate, which highlights the benefit of rMISMC since it achieves the canonical rate without providing an index set in advance.

\subsection{Log Gaussian process model}

We set the parameters as $\theta = (\theta_1, \theta_2, \theta_3) = (0,1,(33/\pi/2)^2)$ in the LGP model. Similar to the setting in LGP, when using rMISMC and MISMC on the 2D log Gaussian model, we need to set the starting level to $\alpha_1 = \alpha_2 = 5$, since the regularity shows when $\alpha_1 \geq 5$ and $ \alpha_2 \geq 5$. Further, from Figure~\ref{fig:2dlgp_emp}, we set $s_1=s_2=0.8$ and $\beta_1=\beta_2=1.6$. Same as the cost rate in LGC, one has $\gamma_i=1 + \omega < \beta_i$ for $\omega>0$ and $i=1,2$.

\begin{figure}[H]
  \centering
  \includegraphics{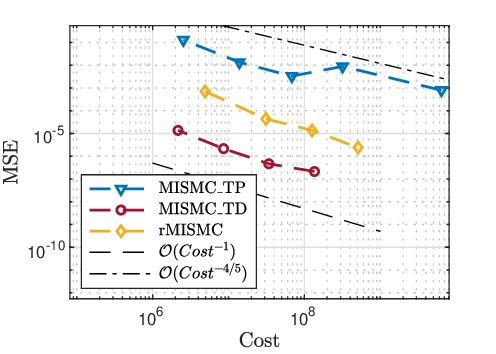}
\caption{Comparison between two methods for the 2D LGP Model. We use 200 realisations to compute the MSE for each experiment and use the rate of convergence to compare the different methods. The blue triangle line is MISMC with ratio estimator and tensor product set; the red circle line is MISMC with ratio estimator and total degree set; the yellow diamond line is rMISMC with ratio estimator. Rate of regression:  (1)MISMC\_TP: -0.565; (2)MISMC\_TD: -1.017; (3)rMISMC: -1.195}
\label{fig:2dlgp_com}
\end{figure}

In the LGP model, we can interpret similar results as in the LGC model: MISMC TP has the sub-canonical rate, and the constant for MISMC TD is smaller than rMISMC. However, the difference between constants for rMISMC and MISMC TD in LGP is much greater than in LGC. There may be other unidentified sources of high variance with respect to the rMISMC. 
This is the subject of ongoing investigation.
In addition, it should be noted that the LGP model is much more sensitive to parameter values ($\theta=(\theta_1,\theta_2,\theta_3)$) than the LGC model.

\subsubsection*{Financial Interests}

KJHL and XL gratefully acknowledge 
the support of IBM and EPSRC 
in the form of an Industrial Case Doctoral Studentship
Award.

\appendix

\section{Proofs}
\label{app:proofs}

The proofs of the various results in the paper are presented here, along with restatements of the results.

\subsection{Proof relating to Proposition \ref{prop:unbiased}}\label{app:prop_unbiased}
{\bf Proposition 2. }
{\em Assume \ref{ass:G1}, and let $\zeta: \sX \rightarrow \bbR$. For any multi-index $\alpha$, we have $p_{\alpha} >0$. Then the randomized MISMC estimator \eqref{eq:rmismc} is free from discretization bias, i.e.
\begin{equation}
\bbE [ \widehat{F}^{\rm rMI}(\zeta) ] = f(\zeta) \, .
\end{equation}}

\begin{proof}

Using the law of iterated conditional expectations,
and \eqref{eq:coupledbias} conditioned on $N_\alpha$,
one has
\begin{eqnarray}
&&\bbE[\widehat{F}^{\rm rMI}(\zeta)]\\\label{eq:runbiasedproof}
&=& \sum_{\alpha \in \bbZ_+^D} \sum_{N_\alpha\in \bbZ_+} 
\frac{\bbP(N_\alpha) N_\alpha}{N p_\alpha}
\bbE[F_\alpha^{N_\alpha}(\psi_{\zeta_\alpha}) \vert N_\alpha], \\
&=& \sum_{\alpha \in \bbZ_+^D} \frac{\bbE[N_\alpha]}{N p_\alpha}
\Delta f_\alpha(\zeta_\alpha), \\
&=& \sum_{\alpha \in \bbZ_+^D} 
\Delta f_\alpha(\zeta_\alpha) = f(\zeta) .
\end{eqnarray}
\end{proof}

\subsection{Proof relating to Proposition \ref{prop:variance}}\label{app:prop_variance}

{\bf Proposition 3. }
{\em Assume \ref{ass:G1} and \ref{ass:rate}, and let $\zeta: \sX \rightarrow \bbR$. For any multi-index $\alpha$, we have $p_{\alpha} >0$. Then the variance of the randomized MISMC estimator \eqref{eq:rmismc}
is given by
\begin{equation}
\bbE [ (\widehat{F}^{\rm rMI}(\zeta) - f(\zeta))^2 ] \leq  \frac{C}{N} \Bigg(
\sum_{\alpha \in \bbZ_{+}^{D}} \frac1{p_\alpha} \prod_{i=1}^D 2^{-\alpha_i \beta_i} 
\end{equation}
$$+ \sum_{\alpha' \neq \alpha \in \bbZ_{+}^{D}} \prod_{i=1}^D 2^{-\alpha_i \beta_i/2}  
\prod_{j=1}^D 2^{-\alpha_j' \beta_j/2} \Bigg).$$
In particular, if 
$$
\sum_{\alpha \in \bbZ_{+}^{D}} \frac1{p_\alpha} \prod_{i=1}^D 2^{-\alpha_i \beta_i} \leq C \, ,
$$
then one has the canonical convergence rate.}

\begin{proof}

One has
\begin{eqnarray}\nonumber
&& \bbE [ (\widehat{F}^{\rm rMI}(\zeta) - f(\zeta))^2 ] \nonumber\\
&& =  
\sum_{\alpha \in \bbZ_+^D} \bbE \left [ \left( \frac{N_\alpha}{N p_\alpha} 
F_\alpha^{N_\alpha}(\psi_{\zeta_\alpha}) - \Delta f_\alpha(\zeta_\alpha) \right)^2\right ] \nonumber\\
&& + \sum_{\alpha' \neq \alpha\in \bbZ_+^D} \bbE \Bigg[ \left(\frac{N_\alpha}{N p_\alpha}  
F_\alpha^{N_\alpha}(\psi_{\zeta_\alpha}) - \Delta f_\alpha(\zeta_\alpha) \right) \nonumber\\
&& \left (  \frac{N_{\alpha'}}{N p_{\alpha'}} 
F_{\alpha'}^{N_{\alpha'}}(\psi_{\zeta_{\alpha'}}) - \Delta f_{\alpha'}(\zeta_{\alpha'})  \right)\Bigg] . \nonumber
\end{eqnarray}
The diagonal and off-diagonal terms will be treated separately. First, for the diagonal terms, adding $\pm \frac{N_\alpha}{N p_\alpha} \Delta f_\alpha(\zeta_\alpha)$ in the square term of the diagonal term and using the triangle inequality, we have
\begin{equation}\label{eqn:triangle_ineq}
\begin{aligned}
 &   \sum_{\alpha \in \bbZ_+^D} \bbE \left [ \left( \frac{N_\alpha}{N p_\alpha} 
F_\alpha^{N_\alpha}(\psi_{\zeta_\alpha}) - \Delta f_\alpha(\zeta_\alpha) \right)^2\right ] \\
&\leq
\sum_{\alpha \in \bbZ_+^D} \Bigg( \sqrt{ \bbE \left [ \left( \frac{N_\alpha}{N p_\alpha} \right)^2  \left( F_\alpha^{N_\alpha}(\psi_{\zeta_\alpha}) - \Delta f_{\alpha}(\zeta_{\alpha}) \right)^2\right ]} \\
& + \sqrt{\bbE \left [ \left( \frac{N_\alpha}{N p_\alpha} - 1 \right)^2 \left( \Delta f_{\alpha}(\zeta_{\alpha}) \right)^2\right ]} \Bigg)^2
\end{aligned}
\end{equation}

Since $\bf p$ is proportional to a multinomial distribution, we have $\bbE[N_{\alpha}^2] = Np_{\alpha}(1-p_{\alpha})+N^2p_{\alpha}^2$. Then, for the first term of (\ref{eqn:triangle_ineq}), the same decomposition of \eqref{eq:runbiasedproof},
along with the result of Theorem \ref{thm:mainshangda} 
(conditioned on $N_\alpha$) and Assumption \ref{ass:rate} (V) gives
\begin{equation}\label{eqn:rewrite}
\bbE \left [ \left( \frac{N_\alpha}{N p_\alpha} \right)^2  \left( F_\alpha^{N_\alpha}(\psi_{\zeta_\alpha}) - \Delta f_{\alpha}(\zeta_{\alpha}) \right)^2\right ]
\leq 
\frac{C}{N p_\alpha} \prod_{i=1}^D 2^{-\alpha_i \beta_i}.
\end{equation}
For the second term of (\ref{eqn:triangle_ineq}), Applying Jensen's inequality to $\left(\Delta f_{\alpha}(\zeta_{\alpha})\right)^2$, we have
\begin{equation}
\bbE \left [ \left( \frac{N_\alpha}{N p_\alpha} - 1 \right)^2 \left( \Delta f_{\alpha}(\zeta_{\alpha}) \right)^2\right ] \leq \frac{C}{N p_\alpha} \prod_{i=1}^D 2^{-\alpha_i \beta_i}.
\end{equation}

Hence, we derive the bound for the diagonal term as follows
\begin{equation}
    \sum_{\alpha \in \bbZ_+^D} \bbE \left [ \left( \frac{N_\alpha}{N p_\alpha} 
F_\alpha^{N_\alpha}(\psi_{\zeta_\alpha}) - \Delta f_\alpha(\zeta_\alpha) \right)^2\right ] 
\end{equation}
$$\leq \frac{C}{N}\sum_{\alpha \in\bbZ_{+}^{D}} \frac{1}{p_{\alpha}} \prod_{i=1}^D 2^{-\alpha_i \beta_i}.$$

Now, the off-diagonal terms are more subtle because of the correlation between 
$N_\alpha$ and $N_{\alpha'}$, which are (proportional to) 
draws from a multinomial distribution with $N/N_{\rm min}$ samples and probability
${\bf p}$, and so
\begin{equation}\label{eq:multinomial}
\bbE [ (N_\alpha - N p_\alpha)(N_{\alpha'} - Np_{\alpha'}) ] = - N p_\alpha p_{\alpha'} \, .
\end{equation}
Using the law of iterated conditional expectations
$$
\bbE \Bigg[ \left(\frac{N_\alpha}{N p_\alpha}  
F_\alpha^{N_\alpha}(\psi_{\zeta_\alpha}) - \Delta f_\alpha(\zeta_\alpha) \right)
$$
$$
\left (  \frac{N_{\alpha'}}{N p_{\alpha'}}  
F_{\alpha'}^{N_{\alpha'}}(\psi_{\zeta_{\alpha'}}) - \Delta f_{\alpha'}(\zeta_{\alpha'}) \right)\Bigg] , 
$$
$$
=
\bbE \Bigg[ \bbE \Bigg [ \left(\frac{N_\alpha}{N p_\alpha}  
F_\alpha^{N_\alpha}(\psi_{\zeta_\alpha}) - \Delta f_\alpha(\zeta_\alpha) \right)
$$
$$
\left (  \frac{N_{\alpha'}}{N p_{\alpha'}}  
F_{\alpha'}^{N_{\alpha'}}(\psi_{\zeta_{\alpha'}}) - \Delta f_{\alpha'}(\zeta_{\alpha'}) \right)
\Bigg \vert {\bf N} \Bigg] \Bigg ],
$$
\begin{align}
&= \frac{\Delta f_\alpha(\zeta_\alpha)\Delta f_{\alpha'}(\zeta_{\alpha'})}
{N^2 p_\alpha p_{\alpha'}} 
\bbE [ (N_\alpha - N p_\alpha)(N_{\alpha'} - Np_{\alpha'}) ], \label{eq:inde}
\\ 
&\leq \frac{C}{N} \prod_{i=1}^D 2^{-\alpha_i \beta_i/2} 
\prod_{j=1}^D 2^{-\alpha'_j \beta_j/2} 
. \label{eq:multiuse}
\end{align}
Line \eqref{eq:inde} follows from the conditional independence
of $F_\alpha^{N_\alpha}(\psi_{\zeta_\alpha})$ and 
$F_{\alpha'}^{N_{\alpha'}}(\psi_{\zeta_{\alpha'}})$ 
given ${\bf N} = \{N_\alpha\}_{\alpha \in \bbZ_+^D}$
and some simple algebra.
Line \eqref{eq:multiuse} follows from \eqref{eq:multinomial}
and Assumption \ref{ass:rate} (V), 
along with Jensen's inequality applied to 
$(\Delta f_\alpha(\zeta_\alpha))^2$ and $(\Delta f_{\alpha'}(\zeta_{\alpha'}))^2$.

Furthermore, 
\begin{align}
    & \sum_{\alpha' \neq \alpha \in \bbZ_{+}^{D}} \prod_{i=1}^D 2^{-\alpha_i \beta_i/2}  
\prod_{j=1}^D 2^{-\alpha_j' \beta_j/2} \\
& = \Big(\sum_{\alpha \in \bbZ_{+}^{D}} \prod_{i=1}^D 2^{-\alpha_i \beta_i/2} \Big)^2 - \sum_{\alpha \in \bbZ_{+}^{D}} \prod_{i=1}^D 2^{-\alpha_i \beta_i}, \nonumber \\
& = \Big(\prod_{i=1}^D \frac{1}{1-2^{- \beta_{i}/2}}\Big)^2 - \prod_{i=1}^D \frac{1}{1-2^{- \beta_{i}}}, \nonumber \\
& = \prod_{i=1}^D \frac{1}{(1-2^{- \beta_{i}/2})^2} - \prod_{i=1}^D \frac{1}{1-2^{- \beta_{i}}} < C.
\end{align}
Since $\beta_i >0$, $1-2^{-\beta_i} <1$ and $1-2^{-\beta_i/2} <1$, we can found a constant $C$ to bound the last line.
\end{proof}

\subsection{Proof relating to Theorem \ref{thm:main}}\label{app:thm_main}

{\bf Theorem 4. }
{\em Assume \ref{ass:G1} 
and \ref{ass:rate} (V,C), with $\beta_i>\gamma_i$ for $i=1,\dots,D$. Then, for suitable $\varphi: \sX \rightarrow \bbR$, it is possible to choose a probability distribution ${\bf p}$ on $\bbZ_+^D$ such that for some $C>0$ 
$$
\bbE [ ( \widehat \varphi^{\rm rMI} - \pi(\varphi) )^2 ] \leq \frac{C}{N} \, ,
$$
and expected ${\rm COST}(\widehat \varphi^{\rm rMI}) \leq C N$, i.e. the canonical rate.
The estimator $\widehat \varphi^{\rm rMI}$ is defined in equation \eqref{eq:rmismcrat}.}

\begin{proof}
Since our unnormalized estimator is unbiased, we have
\begin{equation}
    \bbE\big[(\widehat{\varphi}^{\text{rMI}} - \pi(\varphi))^2\big] \leq \max_{\zeta \in \{\varphi,1\}} \bbE\big[ (\widehat{F}^{\rm rMI}(\zeta) - f(\zeta))^2 \big].
\end{equation}
Then, $\bbE[(\hat{\varphi}^{\text{rMI}} - \pi(\varphi))^2]$ is less than $C/N$ as long as $\max_{\zeta \in \{\varphi,1\}} \bbE\big[ (\widehat{F}^{\rm rMI}(\zeta) - f(\zeta))^2 \big]$ is of $\cO(N^{-1})$. 

Following from Proposition~\ref{prop:variance}, we need
\begin{equation}\label{eqn:thm_var}
    \sum_{\alpha \in \bbZ_{+}^{D}} \frac1{p_\alpha} \prod_{i=1}^D 2^{-\alpha_i \beta_i} \leq C
\end{equation}
to let $\bbE\big[ (\widehat{F}^{\rm rMI}(\zeta) - f(\zeta))^2 \big]$ be of $\cO(N^{-1})$. Additionally, we let the distribution follows from a simple constrained minimization of
the expected cost for one sample
\begin{equation}\label{eqn:thm_cost}
    \sum_{\alpha \in \bbZ_+^D} p_\alpha \prod_{i=1}^D 2^{\alpha_i \gamma_i} \leq C.
\end{equation}
Since $p_{\alpha}>0$, one finds that
\begin{equation}
    p_\alpha \propto \prod_{i=1}^D 2^{-\alpha_i (\gamma_i+\beta_i)/2}
\end{equation}
is one suitable choice for probabilities to satisfy the two conditions stated in (\ref{eqn:thm_var}) and (\ref{eqn:thm_cost}). Then the expected cost ${\rm COST}(\widehat \varphi^{\rm rMI}) = 
\sum_{\alpha \in \bbZ_+^D} N p_\alpha \prod_{i=1}^D 2^{\alpha_i \gamma_i}.$ and (\ref{eqn:thm_var}) are then given respectively by
\begin{align}
& \sum_{\alpha \in \bbZ_+^D} N \prod_{i=1}^D 2^{\alpha_i (\gamma_i-\beta_i)/2}, \\
& \sum_{\alpha \in \bbZ_+^D} \prod_{i=1}^D 2^{\alpha_i (\gamma_i-\beta_i)/2}.
\end{align}
By assumption $\gamma_i-\beta_i <0$, the expected cost is bounded by $CN$ and (\ref{eqn:thm_var}) is finite. Hence, $\bbE\big[ (\widehat{F}^{\rm rMI}(\zeta) - f(\zeta))^2 \big]$ is of $\cO(N^{-1})$.
\end{proof}

\section{Figures}
\label{app:2}

The plots of best log linear fit of convergence parameters for the various problems are illustrated in Figures \ref{fig:1dtoy_emp}, \ref{fig:2dpde_emp}, \ref{fig:2dlgcp_emp} and \ref{fig:2dlgp_emp}.

\begin{figure*}[t]
  \centering
  \includegraphics{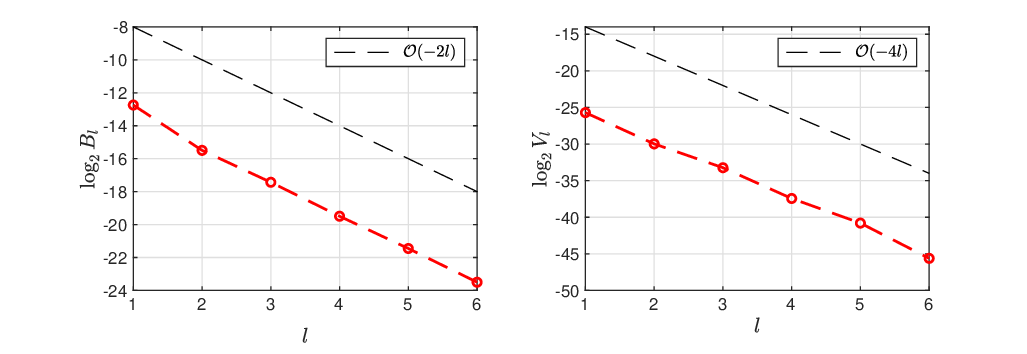}  
\caption{Convergence parameters fitting for the 1D inverse toy problem. Empirical values for strong and weak convergence parameters. All plots use 100 realizations with 1000 samples in each index as the maximum level $L_{\max}=6$ at each experiment. (Left) Logarithm plot with base 2 for the mean of difference and reference line with slope -2; (Right) Logarithm plot with base 2 for the variance of difference and reference line with slope -4}
\label{fig:1dtoy_emp}
\end{figure*}

\begin{figure*}[t]
  \centering
   \includegraphics{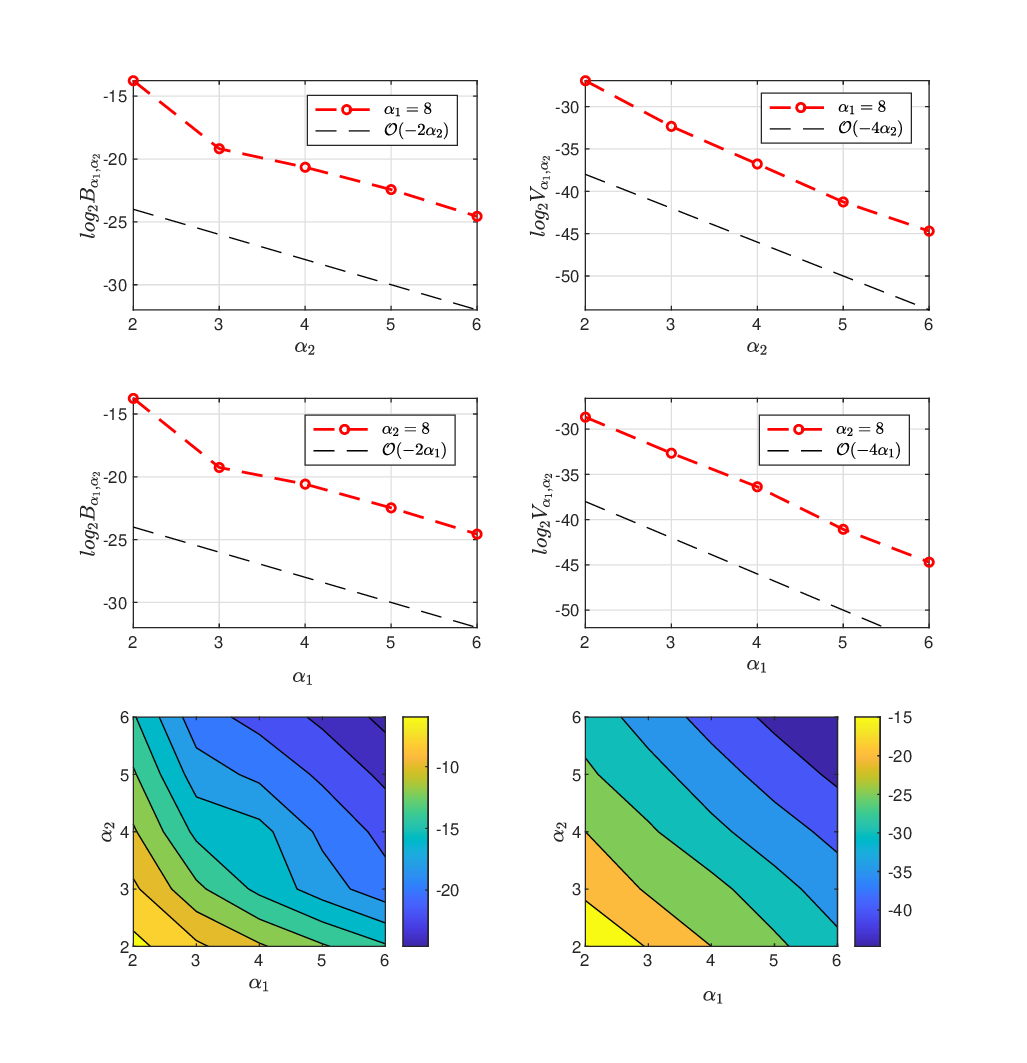}  
\caption{Convergence parameters fitting for the 2D inverse elliptic PDE problem. Empirical values for strong and weak convergence parameters. All plots use 20 realizations with 1000 samples in each index as the maximum multi-index $\alpha_1=6$ and $\alpha_2=6$ for each experiment. (Left) Logarithm plots with base 2 for the mean of unnormalized increments of increments; (Right) logarithm plots with base 2 for the variance of unnormalized increments of increments}
\label{fig:2dpde_emp}
\end{figure*}

\begin{figure*}[t]
  \centering
   \includegraphics{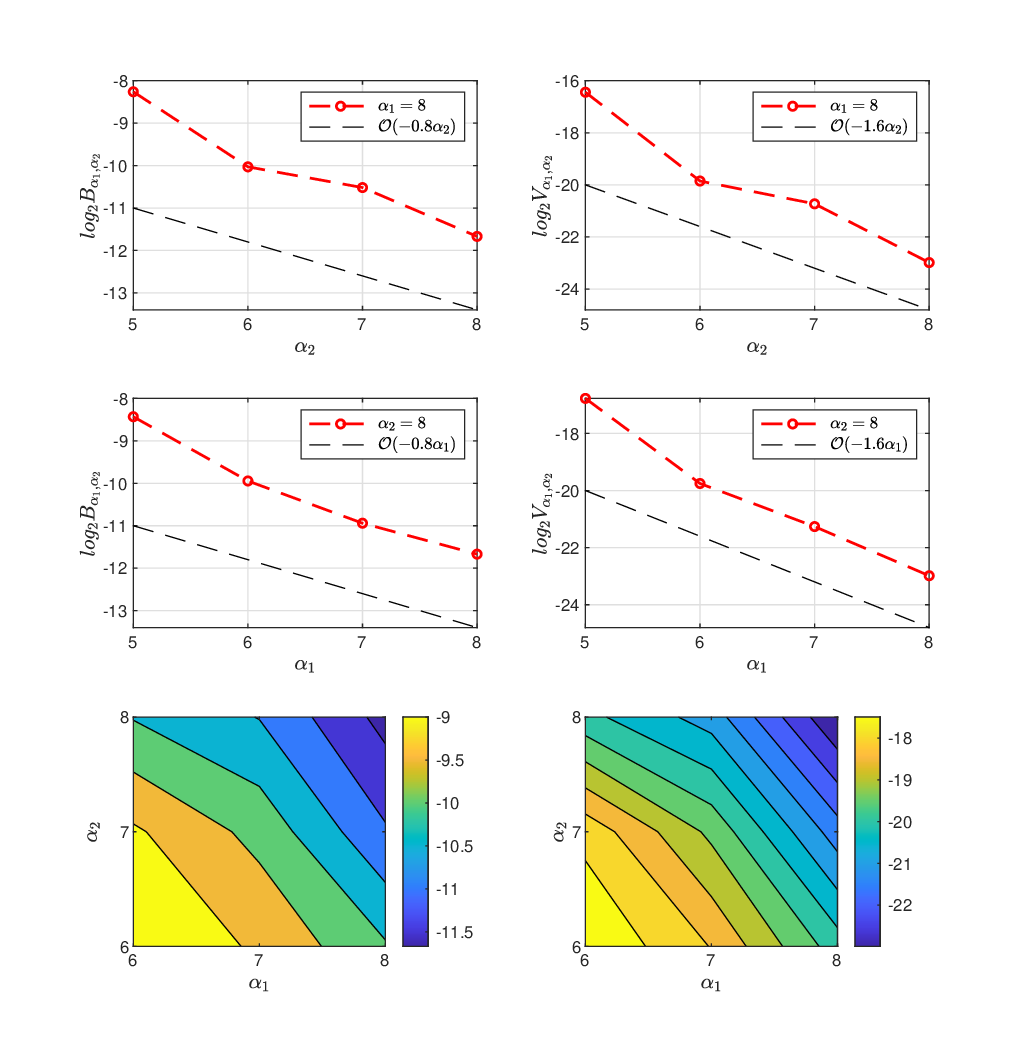}
\caption{Convergence parameters fitting for the 2D LGC model. Empirical values for strong and weak convergence parameters. All plots use 20 realizations with 1000 samples in each index as the minimum multi-index $\alpha_1=\alpha_2=5$ and maximum multi-index $\alpha_1=\alpha_2=8$ at each experiment. (Left) Logarithm plots with base 2 for the mean of unnormalized increments of increments; (Right) logarithm plots with base 2 for the variance of unnormalized increments of increments}
\label{fig:2dlgcp_emp}
\end{figure*}

\begin{figure*}[t]
  \centering
   \includegraphics{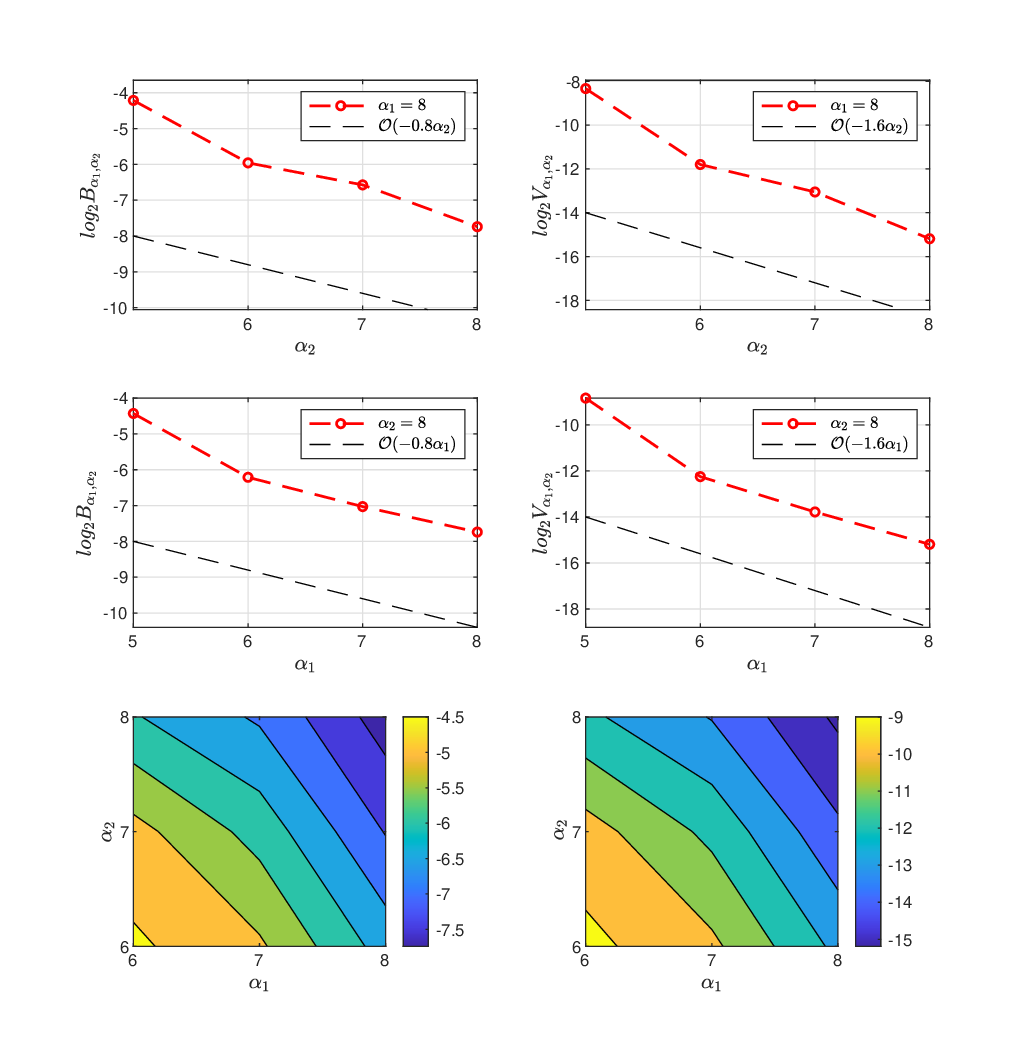}  
\caption{Convergence parameters fitting for the 2D LGP model. Empirical values for strong and weak convergence parameters. All plots use 20 realizations with 1000 samples in each index as the minimum multi-index $\alpha_1=\alpha_2=5$ and maximum multi-index $\alpha_1=\alpha_2=8$ at each experiment. (Left) Logarithm plots with base 2 for the mean of unnormalized increments of increments; (Right) logarithm plots with base 2 for the variance of unnormalized increments of increments}
\label{fig:2dlgp_emp}
\end{figure*}

\bibliography{refs}
\bibliographystyle{plain}

\end{document}